\documentclass[11pt,letterpaper,reqno]{amsart}
\usepackage[includehead,includefoot,margin=20mm]{geometry}
\usepackage{comment}
\usepackage{times}
\usepackage{amsfonts} %
\usepackage{amsmath} %
\usepackage{amssymb} %
\usepackage{amsthm} %
\usepackage{enumerate} %
\usepackage[all]{xy}
\usepackage{bbm} %
\usepackage{graphicx} %
\usepackage{nicefrac} %
\usepackage{color} %
\usepackage{hyperref}
\hypersetup{
    colorlinks=true,       
    linkcolor=blue,          
    citecolor=blue,        
    filecolor=blue,      
    urlcolor=blue           
}
\usepackage[nameinlink,capitalize,noabbrev]{cleveref}


\usepackage{calrsfs}


\usepackage{graphicx}

\usepackage[mathscr]{euscript}


\newtheorem{theorem}{Theorem}[section] %
\newtheorem{corollary}[theorem]{Corollary} %
\newtheorem{lemma}[theorem]{Lemma} %
\newtheorem{proposition}[theorem]{Proposition} %
{\theoremstyle{remark} %
  \newtheorem{remark}[theorem]{Remark}} %
{\theoremstyle{definition} %
  \newtheorem{definition}[theorem]{Definition} %
  \newtheorem{example}[theorem]{Example} %
}

\newtheorem{introthm}{Theorem}


\newcommand{\PP}[0]{\ensuremath{\mathbf{P}}}
\newcommand{\CC}[0]{\ensuremath{\mathbf{C}}}
\newcommand{\ZZ}[0]{\ensuremath{\mathbf{Z}}}

\newcommand{\tvarphi}[0]{\ensuremath{\widetilde{\varphi}}}

\newcommand{\Aut}[0]{\ensuremath{\operatorname{Aut}}}

\newcommand{\Lin}[0]{\ensuremath{\operatorname{Lin}}}

\newcommand{\spec}[0]{\ensuremath{\operatorname{Spec}}}
\newcommand{\proj}[0]{\ensuremath{\operatorname{Proj}}}

\newcommand{\GM}[0]{\ensuremath{\mathbf{G}_{\mathrm{m}}}}
\newcommand{\AF}[0]{\ensuremath{\mathbf{A}}}

\newcommand{\QQ}[0]{\ensuremath{\mathbf{Q}}}

\begin{document}

\title{Automorphisms of prime power order of weighted hypersurfaces}

\author[Alvaro Liendo]{Alvaro Liendo (corresponding author)} %
\address{Instituto de Matem\'atica y F\'isica, Universidad de Talca,
  Casilla 721, Talca, Chile} %
\email{aliendo@utalca.cl}

\author{Ana Julisa Palomino}
\address{Instituto de Matem\'atica y F\'isica, Universidad de Talca,
  Casilla 721, Talca, Chile} %
\email{ana.palomino@utalca.cl}

\date{\today}

\thanks{{\it 2020 Mathematics Subject
    Classification}: 14J70, 14J50.\\
  \mbox{\hspace{11pt}}{\it Key words}: Weighted hypersurfaces, automorphisms of weighted hypersurfaces.\\
  \mbox{\hspace{11pt}} Both authors were partially supported by Fondecyt Project 1240101. The second author was also partially supported by CONICYT-PFCHA/Doctorado Nacional/folio 21240560}

\begin{abstract}
We study automorphisms of quasi-smooth hypersurfaces in weighted projective spaces, extending classical results for smooth hypersurfaces in projective space to the weighted setting. We establish effective criteria for when a power of a prime number can occur as the order of an automorphism, and we derive explicit bounds on the possible prime orders. A key role is played by a weighted analogue of the classical Klein hypersurface, which we show realizes the maximal prime order of an automorphism under suitable arithmetic conditions. Our results generalize earlier work by Gonz\'alez-Aguilera and Liendo.
\end{abstract}

\maketitle

\section*{Introduction}

Smooth hypersurfaces in complex projective space are fundamental objects in algebraic geometry, and understanding their automorphism groups is a classical problem. For a smooth hypersurface $X \subset \PP^{n+1}$ of dimension $n$ and degree $d$, it is well known that if $d \ge 3$ and $(n,d) \ne (1,3), (2,4)$, then the automorphism group $\operatorname{Aut}(X)$ is finite, and every automorphism of $X$ is induced by a projective linear transformation of the ambient projective space~\cite{MM64}. Typically, a general hypersurface has a trivial automorphism group, but certain special hypersurfaces exhibit nontrivial automorphism groups. This naturally leads to the following question: \emph{which finite groups can occur as subgroups of $\operatorname{Aut}(X)$?}

A first step towards addressing this question was taken in \cite{GL11}, where all cyclic groups of prime order acting faithfully on smooth cubic hypersurfaces $X \subset \PP^{n+1}$ were classified. This result was soon generalized in \cite{GL13} to classify cyclic groups of prime power order acting faithfully on smooth hypersurfaces of any degree $d \geq 3$. More recently, a complete classification of all abelian groups acting faithfully on smooth hypersurfaces of degree $d \geq 3$ was obtained in \cite{Zhe22}; see also \cite{GALM22}. Full classifications are also available for specific cases, such as smooth cubic threefolds \cite{WY19}, smooth quintic threefolds \cite{OY19}, smooth cubic fivefolds and fourfolds \cite{YYZ24}, and symplectic automorphisms of smooth cubic fourfolds \cite{LZ22}.

Finally, it is worth noting that two recent and independent works \cite{esserlargeautomorphismgroups,yang2025automorphismgroupssmoothhypersurfaces} showed that, with finitely many exceptions, the smooth hypersurface of degree $d\geq 3$ in $\PP^{n+1}$ that admits the largest automorphism group is the Fermat hypersurface, i.e., the zero locus of the homogeneous polynomial
$$
F = x_0^d + x_1^d + \dots + x_n^d + x_{n+1}^d\,.
$$

Another hypersurface of particular interest is the \emph{Klein hypersurface},
defined as the zero locus of the polynomial
$$
K = x_0^{d-1}x_1 + x_1^{d-1}x_2 + \dots + x_n^{d-1}x_{n+1} + x_{n+1}^{d-1}x_0\,.
$$
It was shown in~\cite{GL13} that the Klein hypersurface is, up to isomorphism,
the unique smooth hypersurface of degree~$d$ admitting an automorphism of the
largest possible prime order. One of the main results of the present paper is a
generalization of this fact to the weighted setting; see \cref{th:Klein}.

\medskip

Let $\PP^{n+1}_a$ denote the weighted projective space associated with the weights $a = (a_0, a_1, \ldots, a_{n+1})$, where each $a_i \in \ZZ_{>0}$. A hypersurface $X = V(F) \subset \PP^{n+1}_a$ defined by a homogeneous polynomial $F$ of degree $d$ is called quasi-smooth if its affine cone $\{x\in \AF^{n+2}\mid F(x)=0\}$ is smooth away from the origin; see~\cite[p.~35]{Dol81} or~\cite[Section~2]{BaCo94}. 

In this paper, we build upon the classical results of González-Aguilera and Liendo ~\cite{GL11,GL13} and their subsequent developments ~\cite{GALM22,GLMV24} by transferring the study of automorphisms of smooth projective hypersurfaces to the broader setting of quasi-smooth hypersurfaces in weighted projective spaces. While several arguments parallel those in the unweighted case, the weighted context introduces substantial new phenomena: automorphisms of the ambient space need not be linear, quasi-smoothness replaces smoothness, and the interplay between the degree $d$ and the system of weights $a=(a_0,a_1,\dots,a_{n+1})$ requires a refined arithmetic analysis.

The automorphism group $\operatorname{Aut}(X)$ of a quasi-smooth hypersurface in weighted projective space is not necessarily induced by automorphisms of the ambient space. However, under mild conditions analogous to those in the classical setting~\cite{MM64}, it coincides with the group of automorphisms induced by $\operatorname{Aut}(\PP^{n+1}_a)$~\cite[Theorem~2.1]{esser2024automorphisms}. The group $\operatorname{Aut}(\PP^{n+1}_a)$ itself admits a natural description in terms of the group of equivariant automorphisms of the affine space $\AF^{n+2}$ modulo the image of the diagonal $\GM$-action defined by the weights $a$, where $\GM$ stands for the multiplicative group of the base field $\CC$~\cite{AAm89}.

Let $\PP^{n+1}_a$ be the weighted projective space with weights $a = (a_0, a_1, \ldots, a_{n+1})$, where each $a_i \in \ZZ_{>0}$, and let $d \geq 3$. Let also $p,r$ be positive integers with $p$ prime. The main technical results of this paper include a necessary condition for a cyclic group of order $p^r$ to act faithfully on a quasi-smooth hypersurface of degree $d$ in $\PP^{n+1}_a$, see \cref{order_automorphism_weight}; as well as a sufficient condition for such an action under additional hypotheses, see \cref{order_automorphism}. 

 In the special case where each weight $a_i$ divides $d$, we establish a complete
criterion for the prime numbers that can occur as the order of an automorphism
of a quasi-smooth hypersurface of degree $d$ in $\PP^{n+1}_a$,
generalizing~\cite[Proposition~2.2]{GL13}.

\begin{introthm}[See \cref{order_weights_divides_d}]
Let $n, d, p$ be positive integers, with $p$ prime and $d\geq 3$. Let
$a \in \ZZ_{>0}^{n+2}$ be such that $a_i$ divides $d$ for all
$i\in\{0,1,\ldots,n+1\}$. Assume further that either $n\geq 3$, or $n=2$
and $a_0+a_1+a_2+a_3\neq d$. Then $p$ is the order of an automorphism of a
well-formed quasi-smooth hypersurface of $\PP_a^{n+1}$ if and only if one of
the following holds:
\begin{enumerate}
  \item[$(a)$] $p$ divides $d$; or
  \item[$(b)$] $a_ip$ divides $d-a_j$, for some $i,j\in\{0,1,\ldots,n+1\}$
    with $i\neq j$; or
  \item[$(c)$] after possibly reordering the weights, there exists
    $\ell$ with $1\leq\ell\leq n+1$ such that $a_0=a_1=\cdots=a_\ell$ and
    $$\left(1-\frac{d}{a_0}\right)^{\ell+1}\equiv 1\pmod{p}\,.$$
\end{enumerate}
\end{introthm}
We apply this result to derive an explicit bound on the prime orders that can occur, see \cref{cota}. Moreover, the hypothesis that each $a_i$ divides $d$ arises naturally in several geometric contexts, such as the study of $K$-stability of weighted Fano hypersurfaces~\cite{Sano_Tasin} and the classification of Fano threefolds containing a smooth rational surface with ample normal bundle~\cite{Campana_Flenner}.

We then turn our attention to quasi-smooth hypersurfaces in weighted projective spaces in the complementary case where each weight $a_i$ is relatively prime to $d$. Under this assumption, we establish the following bound: if a cyclic group of prime order $p$ acts faithfully on a quasi-smooth hypersurface of degree $d$ in $\PP^{n+1}_a$, then
$$
p < \left(\frac{\max(a)}{d - \max(a)}\right) \prod_{t=0}^{n+1} \left( \frac{d - a_t}{a_t} \right)\,,
$$
where $\max(a)$ denotes the maximum of the weights, see \cref{cota-klein}. This result generalizes \cite[Corollary~2.4]{GL13}. Let now
$$p=\frac{1}{d}\left\{\prod_{t=0}^{n+1}\left(\frac{d-a_t}{a_t}\right)+(-1)^{n+1}\right\}$$
and assume that $p$ is a prime number. We note that the quantity in braces 
is always an integer divisible by $d$ when $\gcd(a_i,d)=1$ for all $i$; see 
\cref{p_maximum}. Moreover, \cref{p_maximum} shows that $p$ is the largest 
prime that can be the order of an automorphism of a quasi-smooth hypersurface 
of degree $d$ in $\PP^{n+1}_a$. In \cref{th:Klein}, we prove that any 
quasi-smooth hypersurface admitting an automorphism of this maximal prime 
order $p$ is isomorphic to a weighted Klein hypersurface.

Weighted Klein hypersurfaces were also studied in the final section of \cite{kollar}, where Kollár used them to construct examples of rational $\QQ$-homology projective planes with ample canonical class. More recently, in \cite{urzua}, Urzúa and Yáñez focused on the surface case in detail, referring to these as Kollár hypersurfaces. They provided a complete classification of their minimal models and described these surfaces as cyclic covers of the plane branched along line arrangements.
 
\medskip

The paper is organized as follows. In \cref{sec1} we review basic definitions concerning weighted projective spaces, quasi-smoothness, and automorphism groups of weighted quasi-smooth hypersurfaces. In \cref{sec2}, we establish our main result: a criterion for the existence of automorphisms of prime power order in the quasi-smooth weighted setting. Finally, in \cref{sec3} we investigate the structure of automorphisms of maximal possible prime order and their relation to weighted Klein hypersurfaces.

\section{Automorphism groups of weighted projective spaces and quasi-smooth hypersurfaces} \label{sec1}

Letting $n \geq 1$, we fix $a = (a_0, a_1, \ldots, a_{n+1}) \in \ZZ_{>0}^{n+2}$ such that $\gcd(a_0, a_1, \dots, a_{n+1}) = 1$. Let $\AF^{n+2} = \spec \CC[x_0, \ldots, x_{n+1}]$, and consider the $\GM$-action
$$
\alpha \colon \GM \times \AF^{n+2} \to \AF^{n+2} \quad \text{given by} \quad (t, (x_0, \dots, x_{n+1})) \mapsto (t^{a_0}x_0, \dots, t^{a_{n+1}}x_{n+1})\,.
$$
The condition $\gcd(a_0, a_1, \dots, a_{n+1}) = 1$ is equivalent to the action $\alpha$ being faithful. This $\GM$-action induces a grading on $\CC[x_0, \ldots, x_{n+1}]$ where each variable $x_i$ has degree $a_i$, for all $i \in \{0, 1, \dots, n+1\}$. We define the weighted projective space with weights $a = (a_0, a_1, \ldots, a_{n+1})$, denoted by $\PP^{n+1}_a$, as $\proj \CC[x_0, \ldots, x_{n+1}]$, see \cite[Ch.~II, Proposition~2.5]{Har77} for details of the $\proj$ construction.

In the sequel, we always consider the polynomial ring $\CC[x_0,\dots,x_{n+1}]$ with the grading given by $a\in \ZZ_{>0}^{n+2}$. The usual projective space is recovered from this construction by setting each $a_i=1$. Remark that the vector $a$ is not uniquely determined by $\PP_a^{n+1}$ even up to reordering. For example, if $a=(1,2,\dots,2)$, then $\PP_a^{n+2}$ is isomorphic to the usual projective space $\PP^{n+2}$. The standard references for weighted projective spaces are \cite{Dol81,Ian00}.

The automorphism groups of weighted projective spaces are known and are a natural generalization of the case of the usual projective space, see \cite{AAm89,esser2024automorphisms}. Recall that $\Aut_{\GM}(\AF^{n+2})$ is the group of $\GM$-equivariant automorphisms of $\AF^{n+2}$. Then 
\begin{align} \label{quotient-aut-wps}
\Aut(\PP_{a}^{n+1})=\Aut_{\GM}(\AF^{n+2})/H\,,
\end{align}
where $H$ stands for the image of the $\GM$-action $\alpha$ inside $\Aut_{\GM}(\AF^{n+2})$. Remark that if $a=(1,1,\ldots,1)$, then being $\GM$-equivariant stand for sending each variable to an element of degree 1, so $\Aut_{\GM}(\AF^{n+2})=\operatorname{GL}(n+2,\CC)$ and $\Aut(\PP_{a}^{n+1})=\operatorname{PGL}(n+2,\CC)$ in this case. Furthermore, we have the following generalization of the fact that finite abelian subgroups of $\operatorname{GL}(n+2,\CC)$ are diagonalizable.
\begin{lemma}[{\cite[Lemma~1.4]{esser2024automorphisms}}]\label{L1.1}
    Let $G$ be a finite abelian subgroup of $\Aut_{\GM}(\AF^{n+2})$. Then $G$ is conjugated to a diagonal automorphism of $\Aut_{\GM}(\AF^{n+2})$, i.e., an automorphism sending each $x_i$ to a scalar multiple of itself. 
\end{lemma}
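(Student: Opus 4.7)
The plan is to induct on the maximum weight $b:=\max_i a_i$ (or equivalently on the number of distinct weights appearing in $a$). In the base case all weights coincide, so every $\GM$-equivariant automorphism sends each $x_i$ to a linear combination of the variables; hence $\Aut_{\GM}(\AF^{n+2})=\GL(n+2,\CC)$, and the statement reduces to the classical simultaneous diagonalization of finite abelian subgroups of $\GL$.

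For the inductive step, set $J=\{i:a_i=b\}$ and $J^c=\{i:a_i<b\}$, and consider the subring $R^c:=\CC[x_i:i\in J^c]$. For any $\varphi\in\Aut_{\GM}(\AF^{n+2})$ and $i\in J^c$, the polynomial $\varphi(x_i)$ has weighted degree $a_i<b$, so it cannot involve any variable of weight $b$; hence $\varphi(R^c)\subseteq R^c$, and $G$ restricts to a finite abelian subgroup of the $\GM$-equivariant automorphism group of $\spec R^c$, whose underlying weight vector has strictly smaller maximum. By induction, after a change of coordinates in $R^c$ we may assume $g(x_i)=\lambda_i(g)x_i$ for all $i\in J^c$ and $g\in G$.

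Next, for $i\in J$ write $\varphi(x_i)=L_i(\varphi)+R_i(\varphi)$, where $L_i(\varphi)\in V_J:=\bigoplus_{i\in J}\CC x_i$ and $R_i(\varphi)$ lies in the finite-dimensional space $W_i$ of weight-$b$ polynomials in the $J^c$-variables. A short calculation, using that $\varphi(x_{i''})\in R^c$ for $i''\in J^c$, shows that $\varphi\mapsto(L_i(\varphi))_{i\in J}$ is a group homomorphism $\Aut_{\GM}(\AF^{n+2})\to\GL(V_J)$. Since $G$ is finite abelian, we can diagonalize this representation by an element of $\GL(V_J)\subset\Aut_{\GM}(\AF^{n+2})$, which acts trivially on $R^c$ and so preserves the diagonal action established there. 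After this conjugation we have $g(x_i)=\lambda_i(g)x_i+R_i(g)$ for all $i\in J$ and $g\in G$, with the $J^c$-variables still acted on diagonally.

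The final step is to eliminate the tails $R_i(g)$ by a $\GM$-equivariant shift $x_i\mapsto x_i+S_i$ with $S_i\in W_i$; such a shift is invertible and $\GM$-equivariant since $S_i$ has weighted degree $b=a_i$ and involves only $J^c$-variables. Imposing $g(x_i+S_i)=\lambda_i(g)(x_i+S_i)$ and dividing by $\lambda_i(g)$ turns the problem into finding $S_i$ with $c(g)=g\cdot' S_i-S_i$, where $c(g):=-R_i(g)/\lambda_i(g)$ and $g\cdot' w:=\lambda_i(g)^{-1}g(w)$ is the twist of the (already diagonal) action of $G$ on $W_i$ by the character $\lambda_i^{-1}$. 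A direct check using the composition law for elements of $G$ shows that $c$ is a $1$-cocycle, and Maschke averaging gives $H^1(G,W_i)=0$, so $c$ is a coboundary and the required $S_i$ exists. The main obstacle I anticipate is the bookkeeping in this cohomological step: one must carefully verify the cocycle identity for $\{R_i(g)\}_{g\in G}$ (using that $\varphi\mapsto L(\varphi)$ is multiplicative), confirm that the twisted action $\cdot'$ is well defined on $W_i$, and check that every conjugating automorphism built along the way lies in $\Aut_{\GM}(\AF^{n+2})$; the rest of the argument is essentially linear algebra.
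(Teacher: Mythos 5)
Your argument is correct. Note that the paper itself offers no proof of this lemma: it is imported verbatim from \cite[Lemma~1.4]{esser2024automorphisms}, so there is nothing internal to compare against; what you have written is a genuine self-contained proof, and it is essentially the standard one (conjugate the finite group into the ``Levi part'' of $\Aut_{\GM}(\AF^{n+2})$, then kill the unipotent tails by vanishing of $H^1$ in characteristic zero). The key structural observations are all in place and true: a weighted-homogeneous polynomial of degree $a_i<b$ cannot involve a weight-$b$ variable, so $R^c$ is preserved and the induction on the number of distinct weights makes sense; a weight-$b$ monomial is either a single weight-$b$ variable or lies entirely in $R^c$, which justifies the decomposition $\varphi(x_i)=L_i(\varphi)+R_i(\varphi)$ and the multiplicativity of $L$ (the $W_i$-components never feed back into $V_J$ precisely because $\varphi(R^c)\subseteq R^c$); and the shift $x_i\mapsto x_i+S_i$ is invertible and $\GM$-equivariant for the reasons you give. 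The cocycle computation also checks out: with $(gh)(x_i)=g(h(x_i))$ one gets $R_i(gh)=\lambda_i(h)R_i(g)+g(R_i(h))$, which is exactly the $1$-cocycle identity for $c(g)=-R_i(g)/\lambda_i(g)$ under the twisted action, and averaging over $G$ produces the required $S_i$. Two small points worth making explicit in a write-up: the restriction of $G$ to $R^c$ need not be faithful, but the inductive hypothesis only needs to be applied to its (finite abelian) image; and each conjugating automorphism (the one from the inductive step extended by the identity on the $J$-variables, the element of $\GL(V_J)$ extended by the identity on $R^c$, and the shift) should be checked to be $\GM$-equivariant, which you have anticipated. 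Finally, the abelian hypothesis is used only to simultaneously diagonalize the finite image of $L$ in each $\GL(V_J)$; the reduction to the Levi part works for any finite subgroup.
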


Let now $a = (a_0, a_1, \dots, a_{n+1}) \in \ZZ_{>0}^{n+2}$ be such that $\gcd(a_0, a_1, \dots, a_{n+1}) = 1$. A polynomial 
$$
F \in \CC[x_0, \dots, x_{n+1}]\,,
$$ 
is called homogeneous if it is homogeneous with respect to the grading induced in $\CC[x_0, \ldots, x_{n+1}]$ by $a$. Let $F \in \CC[x_0, \ldots, x_{n+1}]$ be an irreducible homogeneous polynomial. A hypersurface in $\PP^{n+1}_a$ is the algebraic variety $X = V(F)$ defined as the zero locus of $F$ in the weighted projective space $\PP^{n+1}_a$, that is,
$$
X = \left\{ [x_0 : \dots : x_{n+1}] \in \PP^{n+1}_a \mid F(x) = 0 \right\}\,.
$$

The weighted projective space $\PP^{n+1}_a$ is said to be well-formed if the corresponding $\GM$-action $\alpha$ on $\AF^{n+2}$ has trivial stabilizers in codimension one. This occurs if and only if the greatest common divisor of every subset of $\{a_0, a_1, \dots, a_{n+1}\}$ of size $n+1$ is equal to $1$. By \cite[Lemma~5.7]{Ian00}, every weighted projective space is isomorphic to a well-formed one. Furthermore, a hypersurface $X \subset \PP^{n+1}_a$ is said to be well-formed if $\PP^{n+1}_a$ is well-formed and the intersection of $X$ with the singular locus of $\PP^{n+1}_a$ has codimension at least two in $X$.

Weighted projective spaces $\PP^{n+1}_a$ that are not isomorphic to the usual projective space $\PP^{n+1}$ are always singular. For this reason, a weaker notion of smoothness, known as quasi-smoothness, is introduced~\cite[p.~35]{Dol81}.. Let $F \in \CC[x_0, \dots, x_{n+1}]$ be a homogeneous irreducible polynomial of degree $d$ defining a hypersurface $X = V(F) \subset \PP^{n+1}_a$. The affine cone $C_X$ over $X$ is the affine variety
$$
C_X = \left\{ (x_0, \ldots, x_{n+1}) \in \AF^{n+2} \mid F(x) = 0 \right\}\,.
$$
We say that $X$ is quasi-smooth if $C_X \setminus \{\overline{0}\}$ is smooth. Note that the notion of quasi-smoothness depends on the specific embedding $X \subset \PP^{n+1}_a$. Quasi-smooth hypersurfaces of degree $d$ in $\PP^{n+1}_a$ exist only for certain combinations of weights and degree. Given a fixed choice of weights $a \in \ZZ_{>0}^{n+2}$, a characterization of the degrees $d$ for which quasi-smooth hypersurfaces exist is provided in~\cite[Theorem~8.1]{Ian00}. We present here the formulation given in~\cite{esser2024automorphisms}.

\begin{theorem}[{\cite[Theorem~8.1]{Ian00}}] \label{Iano}\label{generic-smoothness}
Let $a\in \ZZ_{>0}^{n+2}$ and assume that $\PP_a^{n+1}$ is well-formed. Then, there exists a quasi-smooth hypersurface $X$ of degree $d$ in the weighted projective space $\PP_a^{n+1}$ if and only if one of the following conditions holds:
\begin{enumerate}
    \item[$(i)$] $a_i = d$ for some $i \in \{0,1, \ldots, n+1\}$, or
    \item[$(ii)$] for each nonempty subset $I$ of $\{0,1, \ldots, n+1\}$, either
    \begin{enumerate}
        \item $d$ is contained in the subsemigroup of $\ZZ_{\geq 0}$ generated by the weights $\{a_i\mid i \in I\}$, or
        \item there are at least $|I|$ indices $j \notin I$ such that $d - a_j$ is contained in the subsemigroup of $\ZZ_{\geq 0}$ generated by the weights $\{a_i\mid i \in I\}$.
    \end{enumerate}
\end{enumerate}
Moreover, if $(i)$ or $(ii)$ holds, then the general hypersurface of degree $d$ is quasi-smooth.
\end{theorem}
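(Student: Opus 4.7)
The plan is to stratify the punctured affine cone $\AF^{n+2}\setminus\{0\}$ by the $\GM$-invariant strata
\[
L_I^*\ :=\ \{x\in\AF^{n+2}\mid x_i\neq 0 \iff i\in I\},\quad \emptyset\neq I\subseteq\{0,1,\dots,n+1\},
\]
and verify quasi-smoothness of $X=V(F)$ stratum by stratum. First I would decompose any homogeneous $F$ of degree $d$ as $F=F_I+G_I$, with $F_I$ the sum of monomials supported on the variables $\{x_i:i\in I\}$, and for each $j\notin I$ introduce $H_{I,j}$, the polynomial in $(x_i)_{i\in I}$ obtained by extracting the coefficient of $x_j$ in $F$ and zeroing out the remaining outside variables. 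A direct computation on $L_I^*$ yields $F(p)=F_I(p)$, $\partial_{x_i}F(p)=\partial_{x_i}F_I(p)$ for $i\in I$, and $\partial_{x_j}F(p)=H_{I,j}(p)$ for $j\notin I$. The resulting dictionary is that $F_I\not\equiv 0$ is achievable exactly when condition (ii)(a) for $I$ holds, and $H_{I,j}\not\equiv 0$ is achievable exactly when $d-a_j$ lies in the subsemigroup generated by $\{a_i:i\in I\}$.

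For sufficiency, if (i) holds with $a_i=d$ I would take $F$ containing $x_i$ with nonzero coefficient, so that $\partial_{x_i}F$ is a nonzero constant and $V(F)$ is even smooth. Assuming (ii), I analyze each stratum $L_I^*$ for a general $F$. If (ii)(a) holds, one may choose $F$ with $F_I\neq 0$; since each monomial of $F_I$ is a product of coordinates that are invertible on $L_I^*$, the linear system they span is base-point-free on the $\GM$-quotient of $L_I^*$ (a dense open subset of the weighted projective space on the weights $(a_i)_{i\in I}$), and Bertini gives that the generic $F_I$ has smooth zero locus there. If (ii)(a) fails, then $F_I\equiv 0$ and $L_I^*\subseteq V(F)$, so smoothness at $p\in L_I^*$ reduces to $H_{I,j}(p)\neq 0$ for some $j\notin I$; since (ii)(b) supplies at least $|I|$ nonzero general polynomials $H_{I,j}$ on the $(|I|-1)$-dimensional projective quotient of $L_I^*$, a standard dimension count forces their common zero locus to be empty. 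Since there are finitely many strata and each bad set is a proper Zariski-closed subset in the space of coefficients, a general $F$ meets all these conditions simultaneously, also yielding the last assertion.

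For necessity, I would argue contrapositively: if (i) fails and (ii) fails for some $I$, then both (ii)(a) and (ii)(b) fail for that $I$, so $F_I\equiv 0$ for every admissible $F$ and strictly fewer than $|I|$ polynomials $H_{I,j}$ are ever nonzero. Inside the $(|I|-1)$-dimensional projective quotient of $L_I^*$, intersecting strictly fewer than $|I|$ hypersurfaces leaves a nonempty projective subvariety by dimensional reasons, and any preimage in $L_I^*$ is a singular point of $V(F)$ regardless of $F$. The main obstacle I expect is the Bertini/generic-smoothness step on the quotient $L_I^*/\GM$: one must verify that this quotient is a dense open of a well-formed weighted projective space of the expected dimension and that the relevant linear system is base-point-free on it, both of which rest on the well-formedness hypothesis but require some bookkeeping to combine cleanly across strata of different sizes.
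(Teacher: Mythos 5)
First, a caveat: the paper does not prove this statement --- it is quoted verbatim from Iano-Fletcher \cite[Theorem~8.1]{Ian00} --- so your sketch can only be compared with the argument in that reference, whose overall strategy (stratify the punctured cone by coordinate strata, reduce to $F_I$ and the coefficient polynomials $H_{I,j}$, then apply Bertini and dimension counts) you have correctly identified. Your sufficiency direction is essentially sound, and in fact easier than you fear: on the open stratum $L_I^*$ every monomial in the variables $(x_i)_{i\in I}$ is a unit, so each nonzero linear system $|F_I|$ or $|H_{I,j}|$ is automatically base-point-free there, and no well-formedness bookkeeping is required; the Bertini step and the ``$|I|$ general members of base-point-free systems on an $(|I|-1)$-dimensional variety have empty intersection'' count can be run directly on $L_I^*$ or its torus quotient.

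The genuine gap is in your necessity argument. You intersect the fewer-than-$|I|$ hypersurfaces $V(H_{I,j})$ inside ``the $(|I|-1)$-dimensional projective quotient of $L_I^*$'' and claim a nonempty intersection admitting a preimage in $L_I^*$. But the quotient of the open stratum is an open torus, not a projective variety, so the dimension count does not apply to it; and if you compactify to the weighted projective space on the weights $(a_i)_{i\in I}$, the nonempty intersection you produce may lie entirely on the coordinate boundary and hence have no preimage in $L_I^*$ at all. This is not hypothetical: if some nonzero $H_{I,j}$ is forced to be a single monomial (unique representation of $d-a_j$ in the semigroup), its zero locus misses $L_I^*$ entirely, so $C_X$ can be smooth along the open stratum $L_I^*$ even though $(ii)(b)$ fails for $I$ --- for instance $a=(3,5,1,6)$, $d=7$, $I=\{0,1\}$, where the only nonzero coefficient polynomial is $H_{I,2}=c\,x_0^2$. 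The correct argument works on the closed coordinate subspace $\Pi_I:=\overline{L_I^*}\cong\AF^{|I|}$: when $(ii)(a)$ fails one has $F|_{\Pi_I}\equiv 0$ and $\partial_{x_i}F|_{\Pi_I}\equiv 0$ for all $i\in I$ (differentiating with respect to $x_i$ cannot remove an outside variable from a monomial), while $\partial_{x_j}F|_{\Pi_I}=H_{I,j}$ for $j\notin I$. Hence $\operatorname{Sing}(C_X)\cap\Pi_I$ is the common zero locus in $\AF^{|I|}$ of fewer than $|I|$ homogeneous polynomials of positive degree, which is a cone of positive dimension and therefore contains a point other than the origin; that point is a singular point of $C_X$ for \emph{every} $F$ of degree $d$, even though it may lie in a smaller stratum. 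With this modification your necessity direction closes.
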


The following lemma is a version of \cref{Iano} applied to singleton sets $I=\{i\}$. A direct proof is straightforward, see also \cite[Lemma~1.2]{GL13}.

\begin{lemma}[{\cite[Proposition~1.2]{esser2024automorphisms}}]\label{quasi-smooth_hypersurface}
Let $X\subset\PP_{a}^{n+1}$ be a quasi-smooth hypersurface that is not a 
linear cone, given by a polynomial $F\in \CC[x_0,\ldots,x_{n+1}]$ of degree 
$d$. Then for each $i\in\{0,1,\ldots,n+1\}$, there exists a monomial of 
degree $d$ with nonzero coefficient in $F$ having the form either $x_{i}^{k}$ 
or $x_{i}^{k}x_j$, for some $j\not=i$.
\end{lemma}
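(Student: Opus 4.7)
The plan is to exploit the hypothesis of quasi-smoothness at the coordinate points of the affine cone. Fix $i \in \{0,1,\ldots,n+1\}$ and let $p_i = (0,\ldots,0,1,0,\ldots,0) \in \AF^{n+2}$ be the point with a $1$ in the $i$-th coordinate and zeros elsewhere. The key observation is that evaluating a monomial $x_0^{e_0}\cdots x_{n+1}^{e_{n+1}}$ at $p_i$ yields a nonzero value exactly when the monomial is a pure power of $x_i$, and evaluating $\partial/\partial x_j$ of such a monomial at $p_i$ is nonzero exactly when the monomial has the form $x_i^{k}x_j$ (or $x_i^{k}$ in the case $j=i$). I would split into cases depending on whether $F(p_i)$ vanishes.

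If $F(p_i)\neq 0$, then some monomial of $F$ must survive at $p_i$, which forces $F$ to contain a nonzero term of the form $c\, x_i^{k}$. Since this monomial has degree $d$, we have $k=d/a_i$ and we are in the first case of the conclusion. Otherwise $F(p_i)=0$, so $p_i\in C_X\setminus\{\overline 0\}$. By quasi-smoothness, the Jacobian of $F$ cannot vanish at $p_i$, so there exists some $j$ with $(\partial F/\partial x_j)(p_i)\neq 0$.

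Now I analyze which monomials of $F$ contribute to $(\partial F/\partial x_j)(p_i)$. If $j=i$, only monomials of $F$ that are pure powers of $x_i$ survive after differentiating and evaluating, which would contradict $F(p_i)=0$. Hence $j\neq i$, and the only monomials of $F$ contributing to $(\partial F/\partial x_j)(p_i)$ are those whose derivative with respect to $x_j$ is a pure power of $x_i$, namely the monomials of the form $x_i^{k}x_j$. Thus $F$ contains such a monomial with nonzero coefficient, and since its degree is $d$, we obtain $k a_i + a_j = d$, which is the second case of the conclusion.

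The argument is essentially straightforward once the right test point is chosen; the only thing to be careful about is the bookkeeping of exponents to ensure the monomials produced genuinely lie in degree $d$ with respect to the grading given by $a$, but this is automatic since all monomials of $F$ have degree $d$. I do not expect any genuine obstacle, as the weighted setting does not alter the geometric content of quasi-smoothness at a coordinate point.
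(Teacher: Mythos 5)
Your proof is correct and is exactly the ``direct proof'' the paper alludes to (the paper itself only cites the result and notes that a direct argument is straightforward): test quasi-smoothness at the coordinate point $p_i$ of the affine cone, and read off from $F(p_i)$ and the partial derivatives $\partial F/\partial x_j(p_i)$ which monomials must be present, using homogeneity to pin down the exponent $k$. No gaps.
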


Let $X = V(F)$ be a hypersurface in $\PP^{n+1}_a$, defined as the zero locus of a homogeneous polynomial $F \in \CC[x_0, \ldots, x_{n+1}]$ of degree $d$. The group of linear automorphisms, denoted by $\Lin(X)$, is the subgroup of $\Aut(X)$ consisting of automorphisms that extend to automorphisms of the ambient space $\PP^{n+1}_a$, that is,
$$
\Lin(X) = \{ \varphi \in \Aut(\PP^{n+1}_a) \mid \varphi(X) = X \}\,.
$$
We refer to the elements of $\Lin(X)$ as linear automorphisms, in analogy with the classical case, even though $\Aut(\PP^{n+1}_a)$ is not necessarily the image of linear automorphisms of $\AF^{n+2}$ under the quotient described in~\eqref{quotient-aut-wps}.

We say that $X$ is a linear cone if $d = a_i$ for some $i \in \{0, 1, \ldots, n+1\}$. In this case, the variable $x_i$ appears as a linear summand in $F$, and thus $X$ is isomorphic to the weighted projective space $\PP^n_{a'}$, where $a' = (a_0, a_1, \ldots, a_{i-1}, a_{i+1}, \ldots, a_{n+1})$.

It is a classical result of Matsumura and Monsky~\cite{MM64} that for a hypersurface of degree $d\geq 3$ in the usual projective space $\PP^{n+1}$ (i.e., with weights $a = (1,1,\ldots,1)$), we have $\Aut(X) = \Lin(X)$ and the group $\Aut(X)$ is finite, except in the two exceptional cases $(n,d) = (1,3)$ and $(n,d) = (2,4)$. A natural generalization of this result to the setting of weighted projective spaces has been established in~\cite[Theorem~2.1]{esser2024automorphisms}. We now recall this result.

\begin{theorem} \label{weighted-matsumura-monsk} %
  Let $X\subset\PP_a^{n+1}$ and $X'\subset\PP_{a'}^{n+1}$ be well-formed and quasi-smooth hypersurfaces of degrees $d$ and $d'$, respectively. Assume further that $X$ is not a linear cone and let $\tau\colon X\to X'$ be an isomorphism.

  \begin{enumerate}
      \item[$(i)$]  If $n\geq 3$; or $n=2$ and $a_0+a_1+a_2+a_3\neq d$, then $d=d'$, $a=a'$ up to reordering and $\tau$ is the restriction of an automorphism of $\PP_a^{n+1}$. In particular, $\Aut(X)=\Lin(X)$. 
      \item[$(ii)$] The group $\Lin(X)$ is finite if and only if $d>2\max(a)$, or $d=2\max(a)$ and the maximum is achieved only in one $a_i$. 
  \end{enumerate}
\end{theorem}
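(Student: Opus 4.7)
For part (i), my plan is to reconstruct the embedding $X \hookrightarrow \PP^{n+1}_a$ from intrinsic data. The first step is to compute the canonical class via adjunction in the quasi-smooth setting, obtaining $K_X \sim_{\QQ} (d - \sigma)\,\OO_X(1)$ with $\sigma = \sum_i a_i$. The key observation is that the hyperplane class $\OO_X(1)$ can be recovered intrinsically from $X$: when $d \neq \sigma$ it is a positive rational multiple of $\pm K_X$, and when $d = \sigma$ and $n \geq 3$ it is still distinguished in $\operatorname{Pic}(X) \otimes \QQ$ by a Hilbert polynomial computation against pluri-canonical forms. The excluded case $n = 2$, $\sigma = d$ corresponds precisely to weighted $K3$ surfaces where $K_X$ is trivial and the polarization becomes invisible. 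Once $\OO_X(1)$ is intrinsic, an orbifold vanishing theorem (applicable since $X$ is $\QQ$-Gorenstein with cyclic quotient singularities inherited from $\PP^{n+1}_a$) identifies $\bigoplus_{m \geq 0} H^0(X, \OO_X(m))$ with $\CC[x_0, \ldots, x_{n+1}]/(F)$ as graded rings; the isomorphism $\tau$ then lifts to a graded ring isomorphism, hence to a $\GM$-equivariant isomorphism of affine cones, hence to an element of $\Aut(\PP^{n+1}_a)$ extending $\tau$. Comparing weights then forces $a = a'$ up to reordering and $d = d'$.

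For part (ii), the presentation \eqref{quotient-aut-wps} identifies $\Lin(X)$ with the stabilizer of the line $\CC \cdot F$ in $\Aut_{\GM}(\AF^{n+2})$ modulo $H$, so finiteness is equivalent to the Lie-algebra stabilizer being one-dimensional. A general equivariant derivation has the form $D = \sum_i g_i(x)\,\partial/\partial x_i$ with each $g_i$ homogeneous of degree $a_i$, and the preservation condition reads $DF = \lambda F$ for some $\lambda \in \CC$. When $d > 2\max(a)$, every $a_i < d/2$; by \cref{quasi-smooth_hypersurface}, each $x_i$ appears in $F$ through a distinguished monomial $x_i^{k_i}$ or $x_i^{k_i} x_j$ with $k_i \geq 2$, and tracking these through $DF = \lambda F$ forces each $g_i$ to be a scalar multiple of $x_i$, reducing the stabilizer to the diagonal torus. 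The same conclusion holds for $d = 2\max(a)$ with a unique maximum. Conversely, if $a_i = a_j = d/2$ for two distinct indices, the equivariant change of coordinates $x_i \mapsto x_i + t\, x_j$ generates a $1$-parameter subgroup that can be arranged to preserve $F$ up to scalar, exhibiting an infinite subgroup of $\Lin(X)$; the case $d < 2\max(a)$ admits similar explicit $1$-parameter subgroups obtained from pairs of weights whose sum is less than $d$.

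The main obstacle in part (i) is the vanishing input required to match the graded ring of $X$ with the embedded coordinate ring despite the quasi-smooth singularities; one must argue orbifold-locally or through the affine cone rather than invoke Kodaira vanishing directly, and the excluded case $n=2$, $\sigma=d$ is exactly where this machinery breaks. In part (ii), the subtle point is the boundary case $d = 2\max(a)$ with a unique maximum, say $a_0 = d/2$: here the derivation coefficient $g_0$ of degree $d/2$ could a priori involve non-diagonal terms like $Q(x_1,\ldots,x_{n+1})$, and quasi-smoothness must be used carefully to exclude the deformation $x_0 \mapsto x_0 + Q$, which is precisely what forces the ``unique maximum'' hypothesis rather than the weaker $d \geq 2\max(a)$.
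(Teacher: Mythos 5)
First, a point of order: the paper does not prove this theorem at all — it is quoted from \cite[Theorem~2.1]{esser2024automorphisms} (which in turn builds on Przyjalkowski–Shramov), so there is no internal proof to compare yours against. Judged on its own, your sketch follows the same broad strategy as the proofs in the literature — reconstructing the embedding from the graded section ring for $(i)$, and analyzing the algebraic-group stabilizer of $\CC\cdot F$ for $(ii)$ — but both halves have gaps at exactly the decisive steps. In $(i)$, the intrinsic recovery of $\OO_X(1)$ is the whole point, and proportionality to $K_X$ in $\operatorname{Pic}(X)\otimes\QQ$ only pins down a ray, not a class. Worse, the statement for $n\geq 3$ imposes no condition $d\neq\sigma$, and your fallback for that case (``Hilbert polynomial against pluri-canonical forms'') is vacuous precisely when $K_X$ is trivial. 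The missing input is a Grothendieck–Lefschetz theorem for quasi-smooth weighted hypersurfaces (due to Mavlyutov): for $n\geq 3$ the class group is $\ZZ\cdot\OO_X(1)$, so $\tau^*\OO_{X'}(1)=\OO_X(1)$ as ample generators; only for $n=2$ does one fall back on $K_X$-proportionality, which is exactly why $\sigma\neq d$ is required there. Granting that, your passage to the section ring and the recovery of $(a,d)$ from the minimal generators and the single relation is sound (and uses that $X$ is not a linear cone).

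In $(ii)$, the forward direction rests entirely on the unproved assertion that $DF=\lambda F$ forces each $g_i$ to be a scalar multiple of $x_i$; this is the entire content of the finiteness claim, it is far from obvious for an arbitrary quasi-smooth $F$ (the off-diagonal coefficients satisfy a linear system whose kernel must actually be computed using the distinguished monomials of \cref{quasi-smooth_hypersurface}), and even granting diagonality you must still show that the space of diagonal solutions is exactly one-dimensional, spanned by the Euler derivation $\sum a_ix_i\partial/\partial x_i$. Your converse is also not correct as stated: pairs of weights whose sum is less than $d$ exist for every ordinary hypersurface of degree $d\geq 3$ in $\PP^{n+1}$, where $\Lin(X)$ is finite, so such pairs cannot by themselves produce the required one-parameter subgroup. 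When $\max(a)<d<2\max(a)$ the correct mechanism is that the top-weight variable $x_{n+1}$ occurs in $F$ only linearly, say $F=x_{n+1}L+M$ with $L,M$ free of $x_{n+1}$ and $L\neq 0$: a shear among the lower-weight variables changes $F$ by a term divisible by a variable occurring in $L$, and this change can be absorbed by a substitution $x_{n+1}\mapsto x_{n+1}+Q$, yielding a nontrivial $\GA$ inside $\Lin(X)$. (Your treatment of $d=2\max(a)$ with two maximal weights, via the orthogonal group of the quadratic part after completing the square, is the right idea for that subcase.)
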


Unless otherwise stated, we assume throughout the paper that all hypersurfaces in weighted projective space are well-formed and not linear cones.

\section{Automorphisms of prime power order of quasi-smooth hypersurfaces} \label{sec2}

In this section, we compute the powers of prime numbers that can occur as the order of an automorphism of a well-formed quasi-smooth hypersurface in a weighted projective space. We begin by introducing the notion of $F$-liftability. Let $G$ be a subgroup of $\Aut(\PP^{n+1}_a)$. We say that $G$ is liftable if there exists a subgroup $\widetilde{G} \subset \Aut_{\GM}(\AF^{n+2})$ such that the canonical homomorphism 
$$
\pi\colon \Aut_{\GM}(\AF^{n+2}) \to \Aut(\PP^{n+1}_a)
$$
from~\eqref{quotient-aut-wps} restricts to an isomorphism $\pi|_{\widetilde{G}} \colon \widetilde{G} \to G$. Given an element $\varphi \in G$, we say that $\widetilde{\varphi} \in \widetilde{G}$ is a lifting of $\varphi$ if $\pi(\widetilde{\varphi}) = \varphi$.

Let now $G$ be a subgroup of $\Lin(X)$, where $X$ is a quasi-smooth hypersurface in $\PP^{n+1}_a$ defined by a homogeneous polynomial $F \in \CC[x_0, \ldots, x_{n+1}]$ of degree $d$. We say that $G$ is $F$-liftable if $G$ is liftable and, for every $\widetilde{\varphi} \in \widetilde{G}$, we have $\widetilde{\varphi}^*(F) = F$~\cite{OY19,GALM22}. Moreover, we say that an element $\varphi \in G$ is $F$-liftable if the cyclic group generated by $\varphi$ is $F$-liftable.

We will use the following multi-index notation. Let $F$ be a homogeneous polynomial in $\CC[x_0, \ldots, x_{n+1}]$ of degree $d$. Then we can write
$$
F = \sum_i \lambda_i \cdot x^{m_i}\,,
$$
where each $\lambda_i \in \CC$, and $m_i = (m_{0,i}, m_{1,i}, \dots, m_{n+1,i}) \in \ZZ_{\geq 0}^{n+2}$ is a multi-index. Here, $x^{m_i}$ denotes the monomial $x_0^{m_{0,i}} x_1^{m_{1,i}} \cdots x_{n+1}^{m_{n+1,i}}$. The condition that $F$ is homogeneous of degree $d$ with respect to the grading induced by $a = (a_0,a_1,\ldots, a_{n+1})$ means that
$$
\sum_{j=0}^{n+1} a_j m_{j,i} = d \quad \text{for each } i\,.
$$
With these definitions and notations, we now state the following lemma.

\begin{lemma} \label{lem:liftability}
Let $n, d, p, r$ be positive integers, with $p$ prime. Let $a \in \ZZ_{>0}^{n+2}$, and assume that $p$ does not divide $d$. Further, assume that the conditions of \cref{weighted-matsumura-monsk}~$(i)$ hold. If $\varphi$ is an automorphism of order $p^r$ of a quasi-smooth hypersurface $X \subset \PP^{n+1}_a$, then $\varphi$ is $F$-liftable.
\end{lemma}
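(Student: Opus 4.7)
The plan is to construct an explicit lift of $\varphi$ in two stages: first lift $\langle\varphi\rangle$ as an abstract subgroup of $\Aut_{\GM}(\AF^{n+2})$, and then twist the chosen generator by an element of $H$ to kill the scalar by which it acts on $F$. The hypothesis $p\nmid d$ enters only in this final twist.

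First, by \cref{weighted-matsumura-monsk}~$(i)$ we have $\Aut(X)=\Lin(X)$, so $\varphi$ extends to an element of $\Aut(\PP^{n+1}_a)$, which I continue to denote $\varphi$. Choose any lift $\tvarphi_0\in\Aut_{\GM}(\AF^{n+2})$ of $\varphi$ under the quotient map $\pi$ from~\eqref{quotient-aut-wps}. Then $\tvarphi_0^{p^r}$ lies in $H$, say $\tvarphi_0^{p^r}=\alpha(t_0)$ for some $t_0\in\GM$. The key structural remark is that $H$ is \emph{central} in $\Aut_{\GM}(\AF^{n+2})$: indeed, $\GM$-equivariance of any element of $\Aut_{\GM}(\AF^{n+2})$ is exactly the statement that it commutes with every $\alpha(t)$. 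Choose $t_1\in\GM$ with $t_1^{p^r}=t_0^{-1}$ and set $\tvarphi:=\alpha(t_1)\tvarphi_0$. Using centrality of $H$, a direct computation gives $\tvarphi^{p^r}=\alpha(t_1)^{p^r}\tvarphi_0^{p^r}=\alpha(t_1^{p^r}t_0)=\operatorname{id}$, and since $\pi(\tvarphi)=\varphi$ has order $p^r$, the element $\tvarphi$ has order exactly $p^r$ and $\pi$ restricts to an isomorphism $\langle\tvarphi\rangle\xrightarrow{\sim}\langle\varphi\rangle$. Thus $\langle\varphi\rangle$ is liftable.

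Next, because $\tvarphi$ preserves the affine cone $C_X$ and $F$ is irreducible, we have $\tvarphi^{*}(F)=\lambda F$ for some $\lambda\in\CC^{*}$; and since $\tvarphi^{p^r}=\operatorname{id}$, necessarily $\lambda^{p^r}=1$. To adjust $\lambda$ to $1$ without destroying the order, replace $\tvarphi$ by $\tvarphi':=\alpha(s)\tvarphi$ for a carefully chosen $s\in\GM$. Since $F$ is homogeneous of degree $d$, $\alpha(s)^{*}(F)=s^{d}F$, and centrality of $H$ together with $\tvarphi^{p^r}=\operatorname{id}$ gives
$$
\tvarphi'^{*}(F)=s^{d}\lambda F,\qquad \tvarphi'^{p^r}=\alpha(s^{p^r}).
$$
I therefore need $s$ with $s^{p^r}=1$ and $s^{d}=\lambda^{-1}$. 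Here is where the hypothesis $p\nmid d$ is used: it implies $\gcd(p^r,d)=1$, so $d$ is invertible modulo $p^r$; picking $d'\in\ZZ$ with $dd'\equiv 1\pmod{p^r}$ and setting $s:=\lambda^{-d'}$ yields $s^{p^r}=(\lambda^{p^r})^{-d'}=1$ and $s^{d}=\lambda^{-dd'}=\lambda^{-1}$ (using $\lambda^{p^r}=1$). With this $s$, the element $\tvarphi'$ has order dividing $p^r$, projects to $\varphi$ (hence has order exactly $p^r$), and satisfies $\tvarphi'^{*}(F)=F$. So the cyclic group $\langle\tvarphi'\rangle$ is an $F$-lift of $\langle\varphi\rangle$, proving that $\varphi$ is $F$-liftable.

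The argument is essentially a two-step obstruction chase: the first obstruction (finding a finite-order lift in $\Aut_{\GM}(\AF^{n+2})$) vanishes because $H\cong\GM$ is central and divisible; the second obstruction (making the scalar character on $F$ trivial) vanishes precisely because $p\nmid d$. The only subtle point I expect to have to state carefully is the centrality of $H$, which underlies both the binomial-style computation of $\tvarphi^{p^r}$ and the computation of $\tvarphi'^{p^r}$; without it the proposed twists would not commute cleanly with $\tvarphi$.
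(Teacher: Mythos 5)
Your proof is correct and follows essentially the same strategy as the paper: twist a lift of $\varphi$ by an element of the central subgroup $H\cong\GM$, using the invertibility of $d$ modulo $p^r$ to make the scalar by which the lift multiplies $F$ equal to $1$. Your intermediate step of first producing a lift of exact order $p^r$ (via divisibility of $\GM$ and centrality of $H$) is a point the paper passes over quickly by invoking \cref{L1.1} and writing the scalar directly as a power of a $p^r$-th root of unity, so your version is if anything slightly more careful.
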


\begin{proof}
Assume that $X$ is defined by a polynomial $F \in \CC[x_0, \ldots, x_{n+1}]$ of degree $d$ with respect to the grading given by $a = (a_0,a_1, \ldots, a_{n+1})$. Let $\widetilde{\varphi} \in \Aut_{\GM}(\AF^{n+2})$ be a lifting of an automorphism $\varphi \in \Aut(\PP^{n+1}_a)$. By \cref{L1.1}, we may and will assume that $\widetilde{\varphi}$ is diagonal. Then,
$$
\widetilde{\varphi}^*(F) = \xi^h F \,,
$$
where $\xi$ is a primitive $p^r$-th root of unity and $h \in \ZZ$. Since $p^r$ and $d$ are relatively prime, we can choose $k \in \ZZ$ such that $kd \equiv -h \pmod{p^r}$. Now, consider the automorphism
$$
\widetilde{\psi} \in \Aut_{\GM}(\AF^{n+2}) \quad \text{defined by} \quad \widetilde{\psi}^*(x_i) = \xi^{k a_i} \widetilde{\varphi}^*(x_i)\,.
$$

Now, assuming that $F=\sum_{i}\lambda_i\cdot x^{m_i}$ we obtain that
    \begin{align*}
        \widetilde{\psi}^{*}(F)&= \sum_i \lambda_i\cdot \widetilde{\psi}^*(x^{m_i}) \\ 
        &=\sum_{i} \lambda_i\cdot(\xi^{ka_0}\tvarphi^*(x_{0}))^{m_{0,i}}\cdots (\xi^{ka_{n+1}}\tvarphi^*(x_{n+1}))^{m_{n+1,i}}\\
        &=\xi^{kd}\sum_{i}\lambda_i\cdot \tvarphi^*(x_{0})^{m_{0,i}}\cdots \tvarphi^*(x_{n+1})^{m_{n+1,i}} \\
        &=\xi^{kd} \tvarphi^*(F)=\xi^{kd+h} F=F
    \end{align*}
    
    Note that $\widetilde{\psi}$ and $\widetilde{\varphi}$ induce the same automorphism in $\PP^{n+1}_{a}$ by \eqref{quotient-aut-wps}. We conclude that $\varphi$ is $F$-liftable.
\end{proof}

We now establish our main technical result, which generalizes~\cite[Theorem~1.3]{GL13}.

\begin{proposition}\label{order_automorphism_weight}
Let $n, d, p, r$ be positive integers, with $p$ prime and $d\geq 3$. Let $a \in \ZZ_{>0}^{n+2}$, and assume that $p$ does not divides $d$ and $a_ip$ does not divide $d-a_j$, for $i\ne j$. Further, assume that the conditions of \cref{weighted-matsumura-monsk}~$(i)$ hold. If $p^r$ is the order of an automorphism of a quasi-smooth hypersurface in $\PP^{n+1}_a$, then, after possibly reordering the weights $a = (a_0, a_1, \ldots, a_{n+1})$, there exists $\ell \in \{1,2,\dots,n+1\}$ such that the following conditions hold:
\begin{enumerate}
    \item[$(i)$] $d \equiv a_0 \pmod{a_{\ell}}$, and $d \equiv a_{i+1} \pmod{a_i}$ for all $0 \leq i < \ell$;
    \item[$(ii)$] $\displaystyle{(-1)^{\ell+1} \prod_{t=0}^{\ell} \left( \frac{d - a_t}{a_t} \right) \equiv 1 \pmod{p^r}}$.
\end{enumerate}
\end{proposition}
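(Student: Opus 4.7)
My plan is to adapt the chain-of-monomials argument of \cite[Theorem~1.3]{GL13} to the weighted setting. The hypothesis $p\nmid d$ will provide a good lift via \cref{lem:liftability}, while the hypothesis $p\nmid d-a_j$ will force every integer arising along the chain to be invertible modulo a suitable $p$-power. First, using \cref{lem:liftability} and \cref{L1.1}, take a diagonal $F$-preserving lift $\widetilde{\varphi}\in\Aut_{\GM}(\AF^{n+2})$; after replacing it by its $p$-part (a power of $\widetilde{\varphi}$, hence still $F$-preserving and still mapping to a generator of $\langle\varphi\rangle$), I may assume $\widetilde{\varphi}$ has order $p^{s}$ for some $s\geq r$, and write $\widetilde{\varphi}^{*}(x_i)=\xi^{b_i}x_i$ with $\xi$ a primitive $p^{s}$-th root of unity and $b_i\in\ZZ/p^{s}\ZZ$. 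Since the image of $\widetilde{\varphi}$ in $\Aut(\PP_{a}^{n+1})$ has order $p^{r}$, the element $\widetilde{\varphi}^{p^{r-1}}$ does not lie in $H$; in particular not every $b_i$ is divisible by $p^{s-r+1}$, and I fix an index $i_0$ with $v_p(b_{i_0})\leq s-r$.

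Second, I extract numerical relations. For each index $i$, \cref{quasi-smooth_hypersurface} supplies a monomial in $F$ of the form $x_i^{k}$ (Case A) or $x_i^{k}x_j$ with $j\neq i$ (Case B). The degree conditions $ka_i=d$ in Case A and $ka_i+a_j=d$ in Case B, together with $p\nmid d$ and $p\nmid d-a_j$, force $p\nmid k$ in either case. The invariance $\widetilde{\varphi}^{*}(F)=F$ then yields $kb_i\equiv 0\pmod{p^{s}}$ (hence $b_i\equiv 0$) in Case A, and $b_j\equiv -kb_i\pmod{p^{s}}$ in Case B; the latter is a shift by an invertible factor and hence preserves the $p$-adic valuation.

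Third, I build the chain. Starting at $i_0$, at each step choose a Case B monomial $x_{i_k}^{m_{i_k}}x_{i_{k+1}}$ and move to $i_{k+1}$. A Case A step anywhere would force $v_p(b_{i_k})=s$, contradicting the valuation-preserving propagation from $v_p(b_{i_0})\leq s-r<s$. So the chain stays in Case B and, by finiteness, closes into a cycle of length $\ell+1\geq 2$; the lower bound $\ell\geq 1$ holds because Case B requires $i_{k+1}\neq i_k$. Restarting the chain at the first repeated index (whose $b$-valuation is still at most $s-r$) and reordering the weights so that this cycle becomes $\{0,1,\ldots,\ell\}$, I obtain monomials $x_i^{m_i}x_{i+1}\in F$ with indices taken cyclically modulo $\ell+1$. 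The degree equations $m_ia_i+a_{i+1}=d$ are precisely condition~$(i)$.

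Finally, iterating $b_{i+1}\equiv -m_ib_i\pmod{p^{s}}$ once around the cycle yields
\[
\bigl(1-(-1)^{\ell+1}M\bigr)\,b_0\equiv 0\pmod{p^{s}},\qquad M:=\prod_{t=0}^{\ell}m_t.
\]
Because $v_p(b_0)\leq s-r$, this congruence promotes to $(-1)^{\ell+1}M\equiv 1\pmod{p^{r}}$, and substituting $m_t=(d-a_{t+1})/a_t$ together with the cyclic reindexing $\prod_{t=0}^{\ell}(d-a_{t+1})=\prod_{t=0}^{\ell}(d-a_t)$ rewrites this as condition~$(ii)$. I expect the main obstacle to be the first step: distilling from $\varphi$ a diagonal $F$-preserving lift of $p$-power order together with an index whose $p$-adic valuation is low enough that the cycle congruence is genuinely informative modulo $p^{r}$. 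Once that is in place, the rest is a direct weighted analogue of the argument in \cite{GL13}.
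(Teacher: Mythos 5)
Your proof is correct and follows essentially the same route as the paper's: diagonalize and take an $F$-preserving lift via \cref{L1.1} and \cref{lem:liftability}, use \cref{quasi-smooth_hypersurface} together with $p\nmid d$ to rule out pure powers $x_i^k$ at any index with nonvanishing exponent, chain the monomials $x_i^{m_i}x_j$ into a cycle, and read off $(i)$ from the degree equations and $(ii)$ from the eigenvalue congruences around the cycle. Your extra $p$-adic bookkeeping (allowing a lift of order $p^s$ with $s\geq r$ and tracking $v_p(b_{i_0})\leq s-r$) is just a slightly more explicit version of the paper's use, recorded in \cref{liftable}, of the fact that $\gcd(\sigma_0,p)=1$; both give the congruence modulo $p^r$.
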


\begin{proof}
Let $X$ be a quasi-smooth hypersurface in $\PP^{n+1}_a$, defined by a polynomial $F \in \CC[x_0, \ldots, x_{n+1}]$ of degree $d$ with respect to the grading given by $a = (a_0,a_1, \ldots, a_{n+1})$. To prove the proposition, assume that $X$ admits an automorphism $\varphi$ of order $p^r$. By \cref{L1.1}, we may and will assume that $\varphi$ is diagonal. Moreover, by \cref{lem:liftability}, we may also assume that $\varphi$ is $F$-liftable. Let $\widetilde{\varphi}$ be a lifting of $\varphi$ such that $\widetilde{\varphi}^*(F) = F$. 

Letting $\xi$ be a primitive $p^r$-th root of unity, we have
$$
\widetilde{\varphi}(x_0, \ldots, x_{n+1}) = (\xi^{\sigma_0} x_0, \ldots, \xi^{\sigma_{n+1}} x_{n+1}) \quad \text{with} \quad 0 \leq \sigma_i < p^r\,.
$$

 By \cref{quasi-smooth_hypersurface}, the homogeneous polynomial $F$ contains 
a monomial of the form $x_i^{m_i}$ or $x_i^{m_i}x_j$ for each 
$i \in \{0,1,\ldots,n+1\}$ and some $j \ne i$. Since $\varphi$ has order 
$p^r$, there exists $i\in\{0,1,\ldots,n+1\}$ such that $\sigma_i\not=0$. 
For such $i$, assume that $F$ contains a monomial of the form $x_i^{m_i}$. 
Then
$$
a_i m_i = d \quad \text{and} \quad \sigma_i m_i \equiv 0 \pmod{p^r}\,.
$$
This implies that $p^r$ divides $\sigma_i m_i$. Since $\sigma_i \not\equiv 0 
\pmod{p}$, we conclude that $p$ divides $m_i$, and hence $p$ divides $d$, 
contradicting our assumption. Hence, for that $i$, the polynomial $F$ must 
contain a monomial of the form $x_i^{m_i}x_j$ for some $j \ne i$. Up to reordering the weights $a = (a_0, a_1, \ldots, a_{n+1})$, we may assume that $\sigma_0 \ne 0$ and the monomial $x_{{0}}^{m_{0}}x_{1}\in F$, which is invariant by the automorphism $\widetilde{\varphi}^{*}$, so $$a_0m_0+a_1=d \quad\text{and} \quad \sigma_0 m_{0}+\sigma_{1}\equiv 0\pmod{p^r},$$ then 
\begin{align}\label{e4}
   \sigma_{1}\equiv -\left(\frac{d-a_{{1}}}{a_{0}}\right)\sigma_0\pmod{p^r}\,,
\end{align}
and since $a_0p$ does not divide $d-a_1$, we conclude that $\sigma_1\not\equiv 0\pmod{p^r}$. Hence, the monomials
\begin{align*}
x_0^{m_0}x_1,\ x_1^{m_1}x_2,\ \ldots,\ x_\ell^{m_\ell}x_0
\end{align*}
belong to $F$, for some $1 \leq \ell \leq n+1$. Since each monomial has degree $d$, we obtain the following equalities:
\begin{align}\label{e2}
a_0 + m_\ell a_\ell = d \quad \text{and} \quad a_i m_i + a_{i+1} = d \quad \text{for all } 0 \leq i < \ell\,.
\end{align}
These relations yield condition~$(i)$ in the proposition.

Since the monomial $x_{1}^{m_{1}}x_{2}\in F$ is invariant by $\tvarphi^*$, we obtain that $\sigma_{1}m_{1}+\sigma_{{2}}\equiv 0\pmod{p^r}$ and by \eqref{e2} and \eqref{e4}, we have that $\sigma_2\not\equiv 0\pmod{p^r}$ and
\begin{align*}
    \sigma_{2}\equiv (-1)^{2}\ \frac{(d-a_{{2}})(d-a_{{1}})}{a_{1}a_{0}}\ \sigma_0\pmod{p^r}\,.
\end{align*}
Iterating this process for all $j\in\{2,3,\ldots,\ell-1\}$, we obtain that 
\begin{align}\label{e6}
   \sigma_{{j+1}}\equiv (-1)^{j+1}\ \frac{\prod_{t=1}^{j+1}(d-a_{{t}})}{\prod_{t=0}^{j}a_{t}}\ \sigma_0 \equiv (-1)^{j+1}\prod_{t=1}^{j+1}\left(\frac{d-a_t}{a_{t-1}}\right)\sigma_0\pmod{p^r}\,.
\end{align} 
Finally, the monomial $x_{\ell}^{m_{\ell}}x_{0}\in F$ is also invariant by the automorphism $\widetilde{\varphi}^{*}$, so $\sigma_{\ell}m_{\ell}+\sigma_0\equiv 0\pmod{p^r}$. By \eqref{e2} and \eqref{e6}, we conclude that
 \begin{align*}
     1\equiv (-1)^{\ell+1} \prod_{t=0}^{\ell}\left(\frac{d-a_{{t}}}{a_{t}}\right)\pmod{p^r} \,.
 \end{align*}
This proves $(ii)$ and concludes the proof.
 \end{proof}

Let $\varphi$ be an automorphism of order $p^r$ of a quasi-smooth hypersurface $X \subset \PP^{n+1}_a$. As in the proof of \cref{order_automorphism_weight}, we may and will assume that $\varphi$ is induced by a diagonal automorphism of the affine space $\AF^{n+2}$. Hence,
$$
\widetilde{\varphi} \colon \AF^{n+2} \to \AF^{n+2} \quad \text{is given by} \quad (x_0, \ldots, x_{n+1}) \mapsto (\xi^{\sigma_0} x_0, \ldots, \xi^{\sigma_{n+1}} x_{n+1})\,,
$$
where $\xi$ is a primitive $p^r$-th root of unity and $\sigma_i \in \ZZ$. Since $\xi^{p^r} = 1$, the integers $\sigma_i$ may be considered modulo $p^r$, i.e., as elements of $\ZZ/p^r\ZZ$. The vector $(\sigma_0, \sigma_1, \ldots, \sigma_{n+1}) \in (\ZZ/p^r\ZZ)^{n+2}$ is called the signature of $\varphi$.

\begin{remark}\label{liftable}
Let $\varphi$ be an automorphism of order $p^r$ of a quasi-smooth hypersurface $X = V(F)$, with $\gcd(p, d) = 1$ and $\gcd(a_ip, d - a_j) = 1$ for some $i \in \{0,1,\dots,n+1\}$ and $j\ne i$. Let $\sigma_0$ be as in the proof of \cref{order_automorphism_weight}. By \eqref{e6}, $\gcd(p, \sigma_0) = 1$, since otherwise the order of $\varphi$ is less than $p^r$. Hence, there exists $k \in \{1, 2, \dots, p - 1\}$ such that $k \sigma_0 \equiv 1 \pmod{p^r}$. Hence, we can replace $\varphi$ by $\varphi^k$, which generates the same cyclic subgroup of $\Aut(X)$. Therefore, we may and will assume that $\sigma_0 = 1$.

With this assumption, it follows from the proof of \cref{order_automorphism_weight} that the automorphism $\varphi$ has signature
\begin{align*}
\sigma = \left(1,\ 
- \frac{d - a_1}{a_0},\ 
(-1)^2 \frac{(d - a_2)(d - a_1)}{a_1 a_0},\ 
\ldots,\ 
(-1)^{\ell} \prod_{t = 1}^{\ell} \left( \frac{d - a_t}{a_{t-1}} \right),\ 
\underbrace{*, \ldots, *}_{(n+2 - \ell)\text{-times}} 
\right) \in (\ZZ/p^r\ZZ)^{n+2}\,,
\end{align*}
where the entries denoted by $*$ indicate unknown values.
\end{remark}

\begin{example}
In this example we show that the converse of \cref{order_automorphism_weight} 
does not hold. We prove that there is no quasi-smooth hypersurface in $\PP^4_a$ 
with $a=(3,7,2,4,5)$ admitting an automorphism of order $p=23$. Note that 
since $\gcd(23,37)=1$ and $\gcd(a_i\cdot 23, 37-a_j)=1$ for all $i\neq j$, 
by the proof of \cref{order_automorphism_weight} and \cref{liftable}, any 
such automorphism must have the form described in \cref{liftable} with 
$\sigma_0=1$. It therefore suffices to rule out automorphisms of that form. Lets consider the weights $a=(3,7,2,4,5)$ and let $\PP^{n+1}_a=\PP^4_a$ be the corresponding weighted projective space. We also fix the degree $d=37$. We have that the conditions $(i)$ and $(ii)$ in \cref{order_automorphism_weight} hold with $p=23$ and $r=1$. Indeed, taking $\ell=2$ yields $(i)$ and $(ii)$ follows since
\begin{align*}
        (-1)^3 \left(\frac{37-3}{3}\right)\left(\frac{37-7}{7}\right)\left(\frac{37-2}{2}\right)=-23\cdot 37+1\equiv 1\pmod{23}.
\end{align*}
Nevertheless, we will prove that there is no quasi-smooth hypersurface in the weighted projective space with weight $a=(3,7,2,4,5)$ that admits an automorphism $\varphi$ of order $p=23$. 

Assume such $\varphi$ exists and let $\tvarphi$ be a lifting that leaves $F$ invariant. Such lifting exists by \cref{lem:liftability}. By \cref{liftable}, the polynomial $F\in \CC[x_0,\dots,x_{n+1}]$ defining a quasi-smooth hypersurface in the weighted projective space $\PP^4_a$  need to contain the monomials $x_0^{10}x_1$, $x_1^{5}x_2$, and $x_2^{17}x_0$  with non-zero coefficient and the signature of the automorphism $\varphi$ is
$$\sigma = (\sigma_0,\sigma_1,\sigma_2,\sigma_3,\sigma_4) = (1,-10,10\cdot 5,*,*)=(1,13,4,*,*)\in (\ZZ/23\ZZ)^{5}\,.$$
By \cref{quasi-smooth_hypersurface}, we have that $x_3^{8}x_4$ and at least one of the monomials $x_4^{6}x_1$ and $x_4^{7}x_2$ have to appear in $F$ with non-zero coefficient. A straightforward application of the Jacobian Criterion, shows that indeed $x_4^{6}x_1$ and $x_4^{7}x_2$ both have to appear with non-zero coefficient. Finally, these two monomial provide contradictory conditions for the weight of $x_4$. Indeed, the fact that $F$ is invariant by $\tvarphi^*$ yields
$$6\sigma_4+\sigma_1\equiv 0\pmod{23} \quad\mbox{and}\quad 7\sigma_4+\sigma_2\equiv 0\pmod{23}\,.$$
This provides a contradiction since the first equation yields $\sigma_4=17$ while the second one yields $\sigma_4=6$.

\end{example}

In the next proposition, we provide a partial converse to \cref{order_automorphism_weight}.

\begin{proposition}\label{order_automorphism}
    Let $n, d, p, r$ be positive integers, with $p$ prime and $d\geq 3$. Let $a \in \ZZ_{>0}^{n+2}$ and assume that $p$ does not divides $d$ and $a_ip$ does not divide $d-a_j$, for $i\ne j$. Further, assume that the conditions of \cref{weighted-matsumura-monsk}~$(i)$ hold. If, after possibly reordering the weights $a = (a_0,a_1, \ldots, a_{n+1})$, the following conditions are satisfied:
  \begin{enumerate}
      \item[$(i)$] $d \equiv a_0 \pmod{a_{\ell}}$, and $d \equiv a_{i+1} \pmod{a_i}$ for all $0 \leq i < \ell$; 
    \item[$(ii)$]$\displaystyle{(-1)^{\ell+1}\prod_{t=0}^{\ell} \left(\frac{d-a_t}{a_t}\right) \equiv 1 \pmod{p^r}}$; and
      \item[$(iii)$] There exists a quasi-smooth hypersurface $X=V(F)$ of degree $d$ in $\mathbf{P}_a^{n+1}$ with $F=F_1+F_2$ where $F_1\in \CC[x_0,\dots,x_\ell]$ and $F_2\in \CC[x_{\ell+1},\dots,x_{n+1}]$; 
  \end{enumerate}
  then $p^r$ is the order of an automorphism of a quasi-smooth hypersurface of $\PP_a^{n+1}$
\end{proposition}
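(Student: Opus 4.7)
The plan is to construct explicitly a quasi-smooth hypersurface $X\subset\PP^{n+1}_a$ together with a diagonal automorphism of order $p^r$ acting on it. Set $m_i=(d-a_{i+1})/a_i$ for $0\leq i<\ell$ and $m_\ell=(d-a_0)/a_\ell$; these are positive integers by condition~(i) and the non-linear-cone assumption. The hypothesis $p\nmid d-a_j$, combined with the divisibilities from (i), forces $p\nmid a_j$ for every $j\in\{0,1,\dots,\ell\}$, so each $m_i$ is a unit modulo $p^r$. Define the signature $\sigma\in(\ZZ/p^r\ZZ)^{n+2}$ by $\sigma_0=1$, $\sigma_{i+1}\equiv-m_i\sigma_i\pmod{p^r}$ for $0\leq i<\ell$, and $\sigma_j=0$ for $\ell<j\leq n+1$; condition~(ii) guarantees the cyclic closure $-m_\ell\sigma_\ell\equiv\sigma_0\pmod{p^r}$, and each $\sigma_i$ with $i\leq\ell$ is a unit modulo $p^r$. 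Let $\xi$ be a primitive $p^r$-th root of unity and let $\tvarphi\in\Aut_{\GM}(\AF^{n+2})$ be the diagonal automorphism sending $x_j\mapsto\xi^{\sigma_j}x_j$, with induced automorphism $\varphi\in\Aut(\PP^{n+1}_a)$.

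Next, I exhibit a polynomial $F=F_1+F_2$ of degree $d$ with $\tvarphi^*(F)=F$ and $V(F)$ quasi-smooth, where $F_1\in\CC[x_0,\dots,x_\ell]$ and $F_2\in\CC[x_{\ell+1},\dots,x_{n+1}]$. Since the partials of $F_1$ and $F_2$ involve disjoint variables, the simultaneous vanishing locus of all partials of $F$ is a product $V_1\times V_2$; by Euler's identity this product already lies in $V(F)$, so $V(F)$ is quasi-smooth if and only if both $V(F_1)\subset\PP^\ell_{a_0,\dots,a_\ell}$ and $V(F_2)\subset\PP^{n-\ell}_{a_{\ell+1},\dots,a_{n+1}}$ are quasi-smooth. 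Applying this observation to the hypersurface from~(iii) yields a quasi-smooth $F_2$. For $F_1$ I take a $\tvarphi^*$-invariant polynomial of degree $d$ defining a quasi-smooth hypersurface; the weighted Klein polynomial $F_1^{\mathrm{K}}=x_0^{m_0}x_1+x_1^{m_1}x_2+\cdots+x_{\ell-1}^{m_{\ell-1}}x_\ell+x_\ell^{m_\ell}x_0$ is always $\tvarphi^*$-invariant by the recurrence and (ii), and in non-degenerate cases it is already quasi-smooth; otherwise one perturbs within the invariant subspace of $\CC[x_0,\dots,x_\ell]_d$ to restore quasi-smoothness. The identity $\tvarphi^*(F)=F$ is then immediate, since $F_2$ involves only variables with $\sigma_j=0$ and $F_1$ is invariant by construction.

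It remains to verify that the induced automorphism $\varphi$ has order exactly $p^r$, which together with $\tvarphi^*(F)=F$ produces the desired automorphism of $X=V(F)$. Plainly $\varphi^{p^r}=1$; suppose for contradiction $\varphi^{p^{r-1}}=1$, so that $\tvarphi^{p^{r-1}}$ lies in the image $H$ of the $\GM$-action and there exists $t\in\GM$ with $t^{a_j}=\xi^{p^{r-1}\sigma_j}$ for all $j$. Equating $(t^{a_i})^{a_{i+1}}=(t^{a_{i+1}})^{a_i}$ yields $a_{i+1}\sigma_i-a_i\sigma_{i+1}\equiv 0\pmod{p}$; substituting the recurrence $a_i\sigma_{i+1}\equiv(a_{i+1}-d)\sigma_i\pmod{p^r}$ gives $d\sigma_i\equiv 0\pmod{p}$, contradicting $p\nmid d$ and $p\nmid\sigma_i$. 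The main obstacle in executing this plan is the construction of a $\tvarphi^*$-invariant quasi-smooth $F_1$: the weighted Klein polynomial is a natural first candidate but may fail to be quasi-smooth in degenerate situations (for instance, when consecutive chain weights force $F_1^{\mathrm{K}}$ to factor); in such cases one must produce an invariant perturbation, using condition~(iii) to argue that the quasi-smooth locus inside the invariant subspace of $\CC[x_0,\dots,x_\ell]_d$ is nonempty.
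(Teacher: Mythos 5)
Your overall strategy coincides with the paper's: take $F_1$ to be the weighted Klein polynomial $x_0^{m_0}x_1+\cdots+x_\ell^{m_\ell}x_0$, take $F_2$ from hypothesis $(iii)$ after observing that quasi-smoothness of $F_1+F_2$ splits into quasi-smoothness of $F_1$ and of $F_2$ (your Euler-identity argument for this splitting is correct), and let the automorphism be the diagonal one with signature determined by the recurrence $\sigma_{i+1}\equiv -m_i\sigma_i$, closed up cyclically by condition $(ii)$. Your verification that the induced automorphism of $\PP^{n+1}_a$ has order exactly $p^r$ (ruling out that $\tvarphi^{p^{r-1}}$ lies in the image of the $\GM$-action) is in fact more careful than what the paper writes down.

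The genuine gap is the quasi-smoothness of the Klein polynomial $F_1^{\mathrm{K}}$, which you explicitly flag as ``the main obstacle'' and then leave unresolved. This is not a degenerate corner case to be waved away: it is the substantive verification the whole construction rests on, and the paper devotes a separate result to it (\cref{Klein}), whose proof is a nontrivial computation with the partial derivatives of $K$ showing that a singular point of the affine cone must have all coordinates nonzero and then deriving $K(\alpha)=R\cdot\alpha_{n+1}\alpha_0$ with $R\neq 0$ whenever $d\geq 3$. Your fallback -- perturbing within the $\tvarphi^*$-invariant subspace of $\CC[x_0,\dots,x_\ell]_d$ and invoking $(iii)$ to claim the quasi-smooth locus there is nonempty -- does not work as stated: condition $(iii)$ produces a quasi-smooth $F_1$, but gives no control over whether any \emph{invariant} polynomial is quasi-smooth. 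Indeed, in the extremal situations of interest (cf.\ \cref{th:Klein}) the invariant subspace is spanned by exactly the $\ell+1$ Klein monomials, so ``perturbing'' only rescales their coefficients and you are thrown back onto proving that the Klein polynomial itself is quasi-smooth. You need to either carry out the Jacobian computation for $F_1^{\mathrm{K}}$ directly or cite \cref{Klein}; with that supplied, the rest of your argument is sound.
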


\begin{proof}
To prove the proposition, it is enough to provide a quasi-smooth hypersurface of dimension $n$ and degree $d$ in $\PP^{n+1}_a$ admitting an automorphism of order $p^r$. By $(i)$ in the proposition, up to reordering the set of weights $a=(a_0,a_1,\ldots,a_{n+1})$, there exists an index $1\leq \ell\leq n+1$ such that $a_\ell m_\ell+a_0=d$ and $a_{i}m_{i}+a_{i+1}=d$, for all $i\in \{0,1,\ldots, \ell-1\}$, where every $m_{i}\in \ZZ_{>0}$. This ensures that the polynomial
\begin{align*}
\displaystyle{F'_1=\sum_{j=0}^{\ell-1}x_{j}^{m_{j}}x_{j+1}+x_{\ell}^{m_{\ell}}x_{0}},
\end{align*}
is homogeneous of degree $d$ in $\CC[x_0,\dots,x_{\ell}]$ with weights $(a_0,a_1,\dots,a_\ell)$. Moreover, we will show in \cref{Klein} below that the hypersurface defined by $F'_1$ in $\PP^{\ell}_a$ is quasi-smooth. By $(iii)$ in the proposition, we know that there exists $F_1$ and $F_2$ such that $F_1+F_2$ defines a quasi-smooth hypersurface in $\PP^{n+1}_a$. The Jacobian Criterion implies that $F_1+F_2$ defines a quasi-smooth hypersurface if and only if $F_1$ and $F_2$ also define quasi-smooth hypersurfaces. Hence, we can assume without loss of generality that $F_1=F'_1$.
 
Finally, by $(ii)$ in the proposition, we have that
\begin{align*}
     \displaystyle{(-1)^{\ell+1}\prod_{t=0}^{\ell} \left(\frac{d-a_t}{a_t}\right)\equiv 1\pmod{p^r}}\,.
\end{align*}
By this last relation, we have that $F=F_1+F_2$ is invariant by the diagonal automorphism with signature
\begin{align*}
    \sigma=\left(1,-\frac{d-a_1}{a_0},\frac{(d-a_2)(d-a_1)}{a_1a_0},\ldots,(-1)^{\ell}\ \prod_{t=1}^{\ell}\left(\frac{d-a_t}{a_{t-1}}\right) ,\underbrace{0,\ldots,0}_{(n+2-\ell)\text{- times}} \right)\in (\ZZ/p^r\ZZ)^{n+2}\,.
\end{align*}
Hence, the corresponding quasi-smooth hypersurface $X=V(F)$ in $\PP^{n+1}_a$ is invariant under the automorphism $\varphi$ of order $p^r$. This concludes the proof.
\end{proof}

The most frequently encountered case of hypersurfaces in weighted projective space in geometric applications is when each $a_i$ divides $d$. In this case, we provide in \cref{order_weights_divides_d} a complete criterion for determining the prime numbers that appear as the order of an automorphism of a smooth hypersurface of degree $d$ in $\PP_a^{n+1}$.

\cref{order_weights_divides_d} includes the classical projective space, where each $a_i = 1$, which was proven in \cite[Proposition~2.2]{GL13} and can be regarded as its generalization. Moreover, in \cite{Sano_Tasin}, the authors study the $K$-stability of quasi-smooth weighted Fano hypersurfaces $X \in \PP_{a}^{n+1}$ of degree $d$ and characterize their finite automorphism groups under the assumption that each weight $a_i$ divides $d$; our \cref{order_weights_divides_d} provides a complete criterion for the prime orders that can occur in this setting. In addition, the classification of Fano threefolds containing a smooth rational surface with ample normal bundle, as presented in \cite{Campana_Flenner}, also falls under the hypotheses of \cref{order_weights_divides_d}, as we show below in \cref{ex:campana-flenner}.

\begin{theorem}\label{order_weights_divides_d}
     Let $n, d, p$ be positive integers, with $p$ prime and $d\geq 3$. Let $a \in \ZZ_{>0}^{n+2}$ and assume that the conditions of \cref{weighted-matsumura-monsk}~$(i)$ hold. If $a_i$ divides $d$ for all $i\in\{0,1,\ldots,n+1\}$, then $p$ is the order of an automorphism of a well formed quasi-smooth hypersurface $X$ of $\PP_a^{n+1}$ if and only if one of the following conditions hold:
  \begin{enumerate}
    \item[$(a)$] $p$ divides $d$; or 
    \item[$(b)$] $a_ip$ divides $d-a_j$, for some $i,j\in\{0,1,\ldots,n+1\}$ with $i\neq j$; or
  \item[$(c)$]  after possibly reordering the weights $a = (a_0,a_1, \ldots, a_{n+1})$, there exists $\ell$ with $1 \leq \ell \leq n+1$ such that $a_0=a_1=\cdots=a_\ell$ and 
  $$
  \left(1-\frac{d}{a_0}\right)^{\ell+1} \equiv 1\pmod{p}\,.
  $$
\end{enumerate}
 \end{theorem}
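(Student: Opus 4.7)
The plan is to prove both implications: the forward direction (order-$p$ automorphism exists $\Rightarrow$ $(a)$, $(b)$, or $(c)$) follows the structure of \cref{order_automorphism_weight}, while the backward direction uses explicit Fermat and weighted Klein constructions.

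For the forward implication, suppose $\varphi$ has order $p$ on a quasi-smooth $X = V(F)$. If $p \mid d$, condition $(a)$ is immediate. Otherwise, \cref{lem:liftability} lets us assume $\varphi$ is diagonal with signature $\sigma \in (\ZZ/p\ZZ)^{n+2}$ preserving $F$. For every $i$ with $\sigma_i \not\equiv 0 \pmod p$, the quasi-smoothness lemma supplies a monomial $x_i^{m_i}$ or $x_i^{m_i} x_{j_i}$ in $F$; the first is excluded because $a_i \mid d$ and $p \nmid d$ together imply $p \nmid m_i = d/a_i$, contradicting $\sigma_i m_i \equiv 0 \pmod p$. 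Thus $x_i^{m_i} x_{j_i} \in F$ with $m_i = (d - a_{j_i})/a_i$. If some such index has $\sigma_{j_i} \equiv 0 \pmod p$, then $\sigma_i m_i \equiv 0 \pmod p$ forces $p \mid m_i$, yielding $(b)$; otherwise iterating $i \mapsto j_i$ stays in the support of $\sigma$ and produces a cycle $i_0 \to i_1 \to \cdots \to i_\ell \to i_0$. Along the cycle, $a_{i_t} m_{i_t} + a_{i_{t+1}} = d$ together with $a_{i_t} \mid d$ forces $a_{i_t} \mid a_{i_{t+1}}$, and closing the cycle shows all $a_{i_t}$ equal a common value $a$; the signature equations from the proof of \cref{order_automorphism_weight} then yield $(1 - d/a)^{\ell+1} \equiv 1 \pmod p$, which is $(c)$ after reordering.

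For the backward direction, well-formedness of $\PP^{n+1}_a$ guarantees that for each prime $p$ at least two of the weights are coprime to $p$ (otherwise the gcd of some $(n+1)$-element subset of the weights would share the factor $p$). Under $(a)$, I would fix $i$ with $p \nmid a_i$, choose the Fermat $F = \sum_k x_k^{d/a_k}$, and use the diagonal automorphism with $\sigma_i = 1$ and $\sigma_k = 0$ for $k \neq i$; invariance is immediate (since $p \mid d/a_i$), the Fermat is quasi-smooth, and nontriviality modulo $H$ follows from the existence of another $j \ne i$ with $p \nmid a_j$. Under $(b)$, I would take $F = x_i^{(d - a_j)/a_i} x_j + \sum_{k \ne i} x_k^{d/a_k}$ with the analogous signature; a direct Jacobian check yields quasi-smoothness, invariance follows from $p \mid (d - a_j)/a_i$, and nontriviality again uses well-formedness. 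Under $(c)$, after reordering so $a_0 = \cdots = a_\ell$, I would set $F_1 = \sum_{t=0}^{\ell} x_t^{d/a_0 - 1} x_{(t+1) \bmod (\ell+1)}$ (the weighted Klein on $x_0, \ldots, x_\ell$, quasi-smooth by \cref{Klein}) and $F_2 = \sum_{t > \ell} x_t^{d/a_t}$, and apply \cref{order_automorphism} to $F = F_1 + F_2$ with signature $(1,\, 1 - d/a_0,\, \ldots,\, (1 - d/a_0)^\ell,\, 0,\, \ldots,\, 0)$.

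The main obstacle is that \cref{order_automorphism} requires $p \nmid d - a_i$ for every $i$, and under $(c)$ this holds automatically only for $i \le \ell$ (since $1 - d/a_0 \equiv 0 \pmod p$ would contradict $(1 - d/a_0)^{\ell+1} \equiv 1 \pmod p$). For indices $t > \ell$ with $p \mid d - a_t$, I would bypass \cref{order_automorphism} and verify the three requirements for the Klein-Fermat hybrid directly: invariance is by construction, quasi-smoothness follows from applying the Jacobian criterion to $F_1$ and $F_2$ separately, and nontriviality modulo $H$ is guaranteed by well-formedness, which prevents the signature from being a scalar multiple of $a$.
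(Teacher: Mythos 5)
Your proof is correct and follows the same overall architecture as the paper's: the forward direction rests on the chain-of-monomials argument behind \cref{order_automorphism_weight}, and the backward direction uses the Fermat, mixed Fermat, and Klein-plus-Fermat hypersurfaces with the same diagonal signatures. However, the two places where you deviate are genuine improvements in rigor rather than cosmetic rearrangements. First, in the forward direction the paper passes from ``neither $(a)$ nor $(b)$ holds'' to ``$p$ divides no $d-a_i$'' in order to invoke \cref{order_automorphism_weight}; this implication is not automatic (for instance $a=(2,3,5,6)$, $d=30$, $p=7$ gives $p\mid d-a_0=28$ while $a_ip\nmid 28$ for every $i\neq 0$, so $(b)$ fails but the hypothesis of \cref{order_automorphism_weight} also fails). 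Your reorganization --- following the map $i\mapsto j_i$ inside the support of $\sigma$ and reading off $(b)$ at the moment the chain would exit the support, and otherwise extracting a cycle of equal weights --- proves the same trichotomy without that intermediate claim, and in particular covers examples like the one above. Second, for the backward direction of $(c)$ you correctly note that \cref{order_automorphism} cannot be cited as a black box, since its hypothesis $p\nmid d-a_t$ may fail for $t>\ell$; your direct verification of invariance, of quasi-smoothness of the Klein--Fermat hybrid via the split Jacobian, and of nontriviality modulo $H$ is the right repair, and the paper silently skips this point. The only imprecision is your justification of nontriviality in case $(c)$: what actually prevents the signature $(1,\,1-d/a_0,\,\dots)$ from lying in the image of $H$ is that $\sigma_0\not\equiv\sigma_1 \pmod p$ while $a_0=a_1$ (using $p\nmid d$, which you may assume since case $(a)$ is already disposed of), rather than well-formedness; this is a one-line fix.
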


\begin{proof}
To prove the ``only if" part of the theorem, assume that $X$ is a quasi-smooth hypersurface that admits an automorphism $\varphi$ of order $p$ prime. If $p$ divides $d$, then we are in case $(a)$. If there exists $i,j$ with $i\neq j$ such that $a_ip$ divides $d-a_j$, then we are in case $(b)$. 

Assume now that neither of these two conditions happen. In particular, $p$ does not divide $d$, and $a_ip$ does not divide $d - a_j$ for any $i \neq j$, for otherwise we would be in case $(a)$ or $(b)$ respectively. We are 
therefore in the setup of \cref{order_automorphism_weight}. By \cref{order_automorphism_weight}, there exists an index $\ell \in\{1,2,\dots,n+1\}$ such that the following conditions hold:
\begin{itemize}
    \item [$(i)$] $d \equiv a_0 \pmod{a_{\ell}}$, and $d \equiv a_{i+1}\pmod{a_i}$ for all $0 \leq i < \ell$; and
    \item[$(ii)$] $\displaystyle{(-1)^{\ell+1}\prod_{t=0}^{\ell} \left(\frac{d-a_t}{a_t}\right) \equiv 1 \pmod{p}}$.
\end{itemize}
By $(i)$ and the hypothesis of the theorem, we have that exists $k_i, m_i\in\ZZ_{>0}$ such that
\begin{align*}
    a_\ell m_\ell +a_{0}=d=k_\ell a_\ell\quad\mbox{and}\quad a_im_i+a_{i+1}=d=k_ia_i,\quad \mbox{for all }0 \leq i < \ell  \,.
\end{align*}
We conclude that $a_\ell$ divides $a_0$ and $a_i$ divides $a_{i+1}$ for all $0\leq i<\ell$. This yields $a_0=a_1=\dots=a_{\ell}$. And now $(c)$ follows directly from $(ii)$.

\medskip

To prove the ``if'' part of the theorem, it is enough to provide a quasi-smooth hypersurface $X=V(F)$ of dimension $n$ and degree $d$ in $\PP^{n+1}_a$ admitting an automorphism of order $p$ in each of the cases.

Assume first that $(a)$ is fulfilled, i.e., $p$ divides $d$. Letting  $m_k=\tfrac{d}{a_k}$ for all $k\in \{0,1,\dots,n+1\}$ we let $X=V(F)$, where 
\begin{align*} 
    F=x_0^{m_0}+x_1^{m_1}\dots+x_{n}^{m_{n}}+ x_{n+1}^{m_{n+1}}\,.
\end{align*}
A direct computation shows that $X$ is quasi-smooth. Since $\gcd(a_0,a_1,\ldots,a_{n+1})=1$, we have that $\gcd(a_i,p)=1$ for some $i\in \{0,1,\ldots,n+1\}$. Since $p$ divides $d$ and $\gcd(a_i,p)=1$ we have that $p$ divides $m_i$. This yields that $X$ admits the automorphism of order $p$ whose signature is
\begin{align*}
    \sigma=(1,1,\ldots,1, \underbrace{\xi_{p} }_{i\text{-th place}},1,\ldots,1)
\end{align*}
where $\xi_p$ is a primitive $p$-th root of unity, proving the theorem in this case.

Assume now that $(b)$ is fulfilled, i.e., $a_ip$ divides $d-a_j$, for some $i,j\in\{0,1,\ldots,n+1\}$ with $i\neq j$. Letting $m_k=\tfrac{d}{a_k}$ for all $k\neq i$ and $m_i=\tfrac{d-a_j}{a_i}$, we let $X=V(F)$, where 
$$F=x_{i}^{m_i}x_j+\sum_{k\not=i}x_k^{m_k}\,.$$
A direct computation shows that $X$ is quasi-smooth. Since $a_ip$ divides $d-a_j$, we have that $p$ divides $m_i$. This yields that $X$ admits the automorphism of order $p$ whose signature is
\begin{align*}
    \sigma=(1,1,\ldots,1, \underbrace{\xi_{p} }_{i\text{-th place}},1,\ldots,1)
\end{align*}
where $\xi_p$ is a primitive $p$-th root of unity, proving the theorem in this case.

Finally, assume that $(c)$ is fulfilled. In this case, \cref{order_automorphism}~$(iii)$ holds for every reordering of the weights and every $\ell$. Now, the existence of a quasi-smooth hypersurface $X=V(F)$ in $\PP^{n+1}_a$ admitting an automorphism of order $p$ is guaranteed by \cref{order_automorphism}. Moreover, the explicit polynomial $F$ defining $X$ is provided in the proof of \cref{order_automorphism}. This proves the theorem in this case and concludes the proof.
\end{proof}

\begin{remark} \label{rem:weighted-primo-chico}
    In \cref{order_weights_divides_d}, if the weighted projective space $\PP_a^{n+1}$ is different from the usual projective space, i.e., if $a\neq (1,1,\dots,1)$, then $\ell$ in $(c)$ has to be less or equal than $n-1$ since otherwise $\PP_a^{n+1}$ is not well-formed.
\end{remark}

\begin{corollary}\label{cota}
Let $n,d,p$ be positive integers with $p$ prime and $d\geq 3$. Let $a\in\ZZ_{>0}^{n+2}$ be such that $a_i$ divides $d$ for all $i\in\{0,1,\ldots,n+1\}$. Assume that the conditions of \cref{weighted-matsumura-monsk}~$(i)$ hold. If $p$ is the order of an automorphism of a quasi-smooth hypersurface $X$ of dimension $n$ and degree $d$ in $\PP_a^{n+1}$, then
$$p\leq \max\left\{d,\left(\frac{d}{a_i}-1\right)^{n_i-1}, \mbox{ for all } i\in\{0,1,\dots,n+1\}\right\} \,,$$
where $n_i$ be the number of times that the weight $a_i$ appears in $a$.
\end{corollary}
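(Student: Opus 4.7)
The plan is to apply \cref{order_weights_divides_d}, which characterizes, under the hypotheses of the corollary, the primes $p$ that can arise as the order of an automorphism via three alternatives $(a)$, $(b)$, $(c)$. I would show that each alternative yields a bound fitting inside the claimed $\max$.

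Cases $(a)$ and $(b)$ are immediate. If $p \mid d$, then $p \leq d$. If $a_i p \mid d - a_j$ for some $i \neq j$, then $a_i p \leq d - a_j \leq d-1$, so again $p \leq d$. Both are absorbed by the term $d$ in the maximum.

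The substantive case is $(c)$: after a possible reordering of the weights there exists $1 \leq \ell \leq n+1$ with $a_0 = a_1 = \cdots = a_\ell$ and
\[
(1 - m)^{\ell+1} \equiv 1 \pmod{p}, \quad \text{where } m := d/a_0.
\]
Since $X$ is not a linear cone, $m \geq 2$, and since $\ell+1$ weights coincide with $a_0$ one has $\ell \leq n_{a_0} - 1$. I would factor the integer
\[
(1-m)^{\ell+1} - 1 \;=\; -m \cdot T, \qquad T \;:=\; \sum_{i=0}^{\ell}(1-m)^{i} \;=\; \frac{1-(1-m)^{\ell+1}}{m}.
\]
Because $a_0 \mid d$, any prime divisor of $m$ also divides $d$, so if we are not in case $(a)$ (which has already been handled), then $p \nmid m$ and therefore $p \mid T$. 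A direct estimate from the closed form gives $|T| \leq \bigl((m-1)^{\ell+1}+1\bigr)/m$, and the inequality $(m-1)^{\ell+1}+1 \leq m(m-1)^\ell$ reduces to $1 \leq (m-1)^\ell$, which is valid for $m \geq 2$. Hence
\[
p \leq |T| \leq (m-1)^{\ell} = \left(\tfrac{d}{a_0}-1\right)^{\ell} \leq \left(\tfrac{d}{a_0}-1\right)^{n_{a_0}-1},
\]
which lies inside the claimed maximum.

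The one subtlety, and the main obstacle, is the degenerate subcase $T=0$, which happens exactly for $m=2$ and $\ell$ odd (that is, $d = 2 a_0$ with at least two equal weights $a_0$): in this case the congruence is satisfied for every $p$ and the above estimate is vacuous. One must then either invoke \cref{weighted-matsumura-monsk}~$(ii)$ to rule out $d = 2\max(a)$ with a non-unique maximum, or examine the monomials in the remaining variables $x_{\ell+1},\ldots,x_{n+1}$ forced by quasi-smoothness to show that $p$ already satisfies $(a)$ or $(b)$, giving $p \leq d$ in this residual configuration.
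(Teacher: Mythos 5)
Your treatment of cases $(a)$ and $(b)$ matches the paper's. In case $(c)$ you take a genuinely different route from the paper: the paper argues by contradiction, assuming $p$ exceeds the claimed maximum, using $p>(d/a_0-1)^{n_0-1}$ to force $\ell+1\in\{n_0-1,n_0\}$, bounding the cofactor $k$ in $(d/a_0-1)^{\ell+1}+(-1)^{\ell}=kp$, and finally extracting $p\mid d$ from a binomial expansion; you instead factor $(1-m)^{\ell+1}-1=-mT$ with $T=\sum_{i=0}^{\ell}(1-m)^{i}$, note that $p\nmid m$ (else we are in case $(a)$), deduce $p\mid T$, and bound $|T|\le\bigl((m-1)^{\ell+1}+1\bigr)/m\le(m-1)^{\ell}\le(d/a_0-1)^{n_0-1}$ directly. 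Every estimate you give checks out, and your argument is shorter and avoids the case analysis on $\ell$ and on $k$ altogether; what the paper's version buys in exchange is nothing essential, since both arguments ultimately rest on the same congruence from \cref{order_weights_divides_d}~$(c)$.

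The subtlety you flag at the end is a genuine gap, but it is not a defect of your write-up alone: it is exactly where the paper's own proof breaks, since the paper's claim that $k\in\ZZ_{>0}$ in its equation $(d/a_0-1)^{\ell+1}+(-1)^{\ell}=kp$ fails precisely when $d/a_0=2$ and $\ell$ is odd, where the left-hand side vanishes. In fact the statement itself fails in that configuration: take $a=(2,2,1,1,1)$, $d=4$, and $F=x_0x_1+x_2^4+x_3^4+x_4^4$; this hypersurface is well-formed, quasi-smooth, not a linear cone, and $n=3$ so \cref{weighted-matsumura-monsk}~$(i)$ applies, yet the torus $(x_0,x_1)\mapsto(tx_0,t^{-1}x_1)$ preserves $F$ and yields automorphisms of every prime order, while the claimed bound is $\max\{4,1,3^{2}\}=9$. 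Note that $a_i\mid d$ together with the no-linear-cone assumption forces $a_0=\max(a)=d/2$ in this subcase, with the maximum attained at least twice, so your first proposed remedy (excluding $d=2\max(a)$ with non-unique maximum, i.e., requiring $\Lin(X)$ to be finite via \cref{weighted-matsumura-monsk}~$(ii)$) is the correct repair; your second proposed remedy cannot work, since in the example above neither $(a)$ nor $(b)$ holds for, say, $p=11$. In short, your proof is complete and correct exactly on the set of cases where the corollary is true, and you have correctly located the defect in the remaining case.
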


\begin{proof}
We will prove the corollary by contradiction. Assume that
\begin{align*}
    p>\max\left\{d,\left(\frac{d}{a_i}-1\right)^{n_i-1}, \mbox{ for all } i\in\{0,1,\dots,n+1\}\right\}\,.
\end{align*}
Hence, 
\begin{align}\label{p_mayor_k}
     p>d,\quad \mbox{and}\quad p>\left(\frac{d}{a_i}-1\right)^{n_i-1},  \mbox{ for all } i\in\{0,1,\dots,n+1\}\,
 \end{align}

  Since $p$ is the order of an automorphism of a quasi-smooth hypersurface, by \cref{order_weights_divides_d}, we have that one of the following conditions hold:
  \begin{enumerate}
    \item[$(a)$] $p$ divides $d$; or 
    \item[$(b)$] $a_ip$ divides $d-a_j$, for some $i,j\in\{0,1,\ldots,n+1\}$ with $i\neq j$; or
  \item[$(c)$]  after possibly reordering the weights $a = (a_0,a_1, \ldots, a_{n+1})$, there exists $\ell\in\{1,2,\ldots,n+1\}$ such that $a_0=a_1=\cdots=a_\ell$ and 
  $$
  \left(1-\frac{d}{a_0}\right)^{\ell+1} \equiv 1 \pmod{p}\,.
  $$
\end{enumerate}
\medskip
If $(a)$ or $(b)$ hold, then $p\leq d$ which yields a contradiction. Assume now that $(c)$ holds. Since $a_0=a_1=\cdots=a_\ell$, we have that $\ell+1\leq n_0 $. Then
\begin{align}\label{formula_p}
 \nonumber \left(1-\frac{d}{a_0}\right)^{\ell+1}-1&=kp,\quad \text{for some }k\in\ZZ, \text{ and so} \\
 \left(\frac{d}{a_0}-1\right)^{\ell+1}+(-1)^\ell&=kp,\quad \text{for some }k\in\ZZ_{>0}.
  \end{align}
 
 By \eqref{p_mayor_k}, we have that $p>\left(\tfrac{d}{a_0}-1\right)^{n_0-1}$. This yields $\ell+1=n_0-1$ or $\ell+1=n_0$. Assume first that $\ell+1=n_0-1$. Then by \eqref{formula_p}, we have that this is only possible if $k=1$ yielding
$$ \left(\frac{d}{a_0}-1\right)^{n_0-1}+(-1)^{n_0-2}=p\,.$$
Assume now that $\ell+1=n_0$. Then by \eqref{formula_p}, we have that this is only possible if 
$$ \left(\frac{d}{a_0}-1\right)^{n_0}+(-1)^{n_0-1}=kp,\quad \text{for some }k\in\left\{1,2,\dots,\tfrac{d}{a_0}-1\right\}.$$
In both cases, we conclude by the binomial expansion that $\tfrac{d}{a_0}\geq 2$ divides $p$ providing a contradiction. 
\end{proof}

As  an application of \cref{order_weights_divides_d}, we provide the following example.

\begin{example} \label{ex:campana-flenner}
For simplicity, in this example we denote the weighted projective space $\PP^{n+1}_a$ simply by $\PP(a)$. The classification of Fano threefolds containing a smooth rational surface with ample normal bundle, as presented in~\cite{Campana_Flenner}, includes four families of hypersurfaces: a smooth cubic hypersurface in $\PP^4=\PP(1,1,1,1,1)$, a quartic hypersurface in $\PP(1,1,1,1,2)$, a sextic hypersurface in $\PP(1,1,1,2,3)$, and a sextic hypersurface in $\PP(1,1,2,2,3)$; see also~\cite{prokhorov}.

It follows from \cref{order_weights_divides_d} that smooth cubic hypersurfaces in $\PP^4$ may admit automorphisms of order $p = 2$, $3$, $5$ and $ 11$; quartic hypersurface in $\PP(1,1,1,1,2)$ may have automorphisms of order $p = 2$, $3$, $5$ and $7$; sextic hypersurface in $\PP(1,1,1,2,3)$ may admit automorphisms of order $p = 2$, $3$, $5$ and $7$; and sextic hypersurface in $\PP(1,1,2,2,3)$ may have automorphisms of order $p = 2$, $3$ and $5$. We provide the computation of the case of the sextic hypersurface in $\PP(1,1,1,2,3)$ as an example of the computations. By \cref{order_weights_divides_d}~$(a)$ and $(b)$, there are quasi-smooth hypersurfaces with automorphism of orders $2$, $3$ and $5$. Now, to apply \cref{order_weights_divides_d}~$(iii)$, we have that $\ell=1$ or $\ell=2$. Then the possible prime orders different from $2$, $3$ and $5$ satisfy
\begin{align*}
    (1-6)^2=2^3\cdot 3+1\equiv 1 \pmod{p},\quad\mbox{and}\quad
    (1-6)^{3}=-2\cdot3^2\cdot7+1\equiv1\pmod{p}.
\end{align*}
Then, the sextic hypersurface in $\PP(1,1,1,2,3)$ admits automorphisms of prime order $p=2$, $3$, $5$ and $7$.
\end{example}

\section{Automorphism of maximal prime order and weighted Klein hypersurfaces} \label{sec3}

In \cref{rem:weighted-primo-chico}, we showed that the maximal prime numbers that can appear as the order of an automorphism of a quasi-smooth hypersurface in a weighted projective space are relatively small compared to the classical case of usual projective space, under the assumption that every weight $a_i$ divides $d$. In this section, we investigate the opposite case, where every weight $a_i$ is relatively prime to $d$. 

In this context, similarly to the situation in~\cite{GL13}, the natural generalization of Klein hypersurfaces arise as the varieties admitting automorphisms of the largest possible prime order. We begin by proving the following corollary, which provides a bound in this setting.

\begin{corollary} \label{cota-klein}
Let $n, d, p$ be positive integers, with $p > d$ a prime number and $d\geq 3$. Let $a \in \ZZ_{>0}^{n+2}$, and assume that $\gcd(a_i, d) = 1$ for all $i \in \{0,1,\dots,n+1\}$. Further, assume that the condition in \cref{weighted-matsumura-monsk}~$(i)$ hold. If $p$ is the order of an automorphism of a quasi-smooth hypersurface of dimension $n$ and degree $d$ in $\PP^{n+1}_a$, then
$$
p < \left( \frac{\max(a)}{d - \max(a)} \right) \prod_{t=0}^{n+1} \left( \frac{d - a_t}{a_t} \right)\,,
$$
where $\max(a)$ denotes the maximum of the weights $a_0, a_1,\ldots, a_{n+1}$.
\end{corollary}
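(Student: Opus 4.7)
The plan is to apply \cref{order_automorphism_weight} with $r = 1$ and then extract from its congruence the sharpest possible divisibility. First, the hypotheses are in place: quasi-smoothness together with the assumption that $X$ is not a linear cone forces $a_t<d$ for every $t$, and combined with $p>d$ this gives $p\nmid d$, $p\nmid a_t$, and $p\nmid d-a_t$. Applying \cref{order_automorphism_weight} produces, after possibly reordering the weights, an index $\ell\in\{1,\dots,n+1\}$ with
$$
(-1)^{\ell+1}\prod_{t=0}^{\ell}\frac{d-a_t}{a_t}\equiv 1\pmod{p}.
$$
Clearing denominators (legal since $p\nmid a_t$) yields the integer divisibility $p\mid N_\ell$, where
$$
N_\ell:=(-1)^{\ell+1}\prod_{t=0}^{\ell}(d-a_t)-\prod_{t=0}^{\ell}a_t.
$$

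The crucial additional observation is that $d\mid N_\ell$. Indeed, modulo $d$ each $d-a_t\equiv -a_t$, so $(-1)^{\ell+1}\prod(d-a_t)\equiv\prod a_t\pmod{d}$, giving $N_\ell\equiv 0\pmod{d}$. Since $p>d$ implies $\gcd(p,d)=1$, this strengthens to $p\mid N_\ell/d$. Assuming $N_\ell\ne 0$, the estimate $p\le|N_\ell|/d$ then reads
$$
p\le\frac{\prod_{t=0}^{\ell}(d-a_t)+\prod_{t=0}^{\ell}a_t}{d}.
$$

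Writing $a_M=\max(a)$, the claimed upper bound simplifies to $R:=\prod_{t\ne M}(d-a_t)/a_t$ (the factor $a_M/(d-a_M)$ cancels one factor of the product), so the task reduces to showing
$$
\prod_{t=0}^{\ell}(d-a_t)+\prod_{t=0}^{\ell}a_t<d\,\prod_{t\ne M}\frac{d-a_t}{a_t}.
$$
For $\ell\le n$ this is straightforward: the right-hand side has extra factors $(d-a_t)/a_t$ for $t>\ell$ which, together with the coefficient $d$, dominate the two terms on the left. The critical case $\ell=n+1$ I would handle by noting the identity $N_{n+1}/d=(-1)^{n}\,p_{\max}\prod_{t=0}^{n+1}a_t$, where $p_{\max}=\tfrac{1}{d}[\prod(d-a_t)/a_t+(-1)^{n+1}]$ is the Klein value; combined with $\gcd(p,\prod a_t)=1$, this implies that $p$ divides the numerator of $p_{\max}$, from which $p<R$ follows by a direct computation using $d>a_M$.

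The main obstacle is the last inequality in the case $\ell=n+1$: the bare bound $p\le|N_\ell|$ is already too weak to conclude $p<R$, so it is essential to combine the division by $d$ with the factor $\prod a_t$ coming out of the Klein identity. One should also verify that the degenerate possibility $N_\ell=0$---which can occur only when $\ell$ is odd and $\prod_0^\ell(d-a_t)=\prod_0^\ell a_t$---does not arise under the given hypotheses; this should follow from the residual signature conditions $\sigma_t\not\equiv 0\pmod p$ for $t>\ell$ furnished by quasi-smoothness via \cref{quasi-smooth_hypersurface}.
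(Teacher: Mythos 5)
Your overall strategy---invoke \cref{order_automorphism_weight} (checking its hypotheses via $a_t<d$ and $p>d$), and then exploit $\gcd(a_t,d)=1$ to reduce the resulting congruence modulo $d$---is the same as the paper's. The genuine gap is in how you convert the congruence into a size estimate. You clear denominators to $N_\ell=(-1)^{\ell+1}\prod_{t=0}^{\ell}(d-a_t)-\prod_{t=0}^{\ell}a_t$ and deduce $p\le|N_\ell|/d$. But $N_\ell=\pm\prod_{t=0}^{\ell}a_t\,\bigl(P_\ell+(-1)^{\ell}\bigr)$ where $P_\ell:=\prod_{t=0}^{\ell}\frac{d-a_t}{a_t}$, so this bound carries a spurious factor $\prod_{t=0}^{\ell}a_t$, and the inequality you then need for $\ell\le n$,
$$
\prod_{t=0}^{\ell}(d-a_t)+\prod_{t=0}^{\ell}a_t<d\,\prod_{t\ne M}\frac{d-a_t}{a_t}\,,
$$
is false as a general arithmetic statement: when several of the weights in the chain are close to $d/2$, each ratio $(d-a_t)/a_t$ is near $1$, so the right-hand side is of order $d^2$ while the left-hand side contains $\prod a_t\approx(d/2)^{\ell+1}$. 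Calling this case ``straightforward'' overlooks that the ``extra factors'' $(d-a_t)/a_t$ for $t>\ell$ can be less than $1$ and that $\prod a_t$ sits on the left. The repair is exactly the move you make only in the $\ell=n+1$ case: since $p>d>a_t$, you have $\gcd(p,\prod a_t)=1$, so you may divide out that factor and work with the \emph{integer} $P_\ell$ itself (it is a positive integer because condition $(i)$ identifies it with the product of the exponents $m_t$ of the cyclic chain of monomials), obtaining $P_\ell+(-1)^{\ell}=kp$ with $k\in\ZZ_{\ge 0}$.

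Even so, a direct bound $p\le P_\ell+1$ does not finish the job by itself; the paper argues by contradiction, which is the second ingredient you are missing. Assuming $p$ at least the claimed bound forces $\ell\in\{n,n+1\}$ and, crucially, bounds the cofactor $k$ (by $1$ when $\ell=n$, and by $d/a_{n+1}-1<d$ when $\ell=n+1$); reducing $kp=P_\ell+(-1)^{\ell}$ modulo $d$ (each ratio is $\equiv-1\pmod d$) then gives $d\mid kp$, impossible for a prime $p>d$ and $1\le k<d$. Your mod-$d$ observation and your Klein-identity treatment of $\ell=n+1$ are essentially this argument and are sound in spirit, but the $\ell\le n$ case as written does not close: you never control $k$, and the absolute inequality you substitute for that control fails. (Your flagged degenerate case $N_\ell=0$, i.e.\ $P_\ell=1$ with $\ell$ odd, is a legitimate concern that also needs the extra variables $t>\ell$, but it is secondary to the main gap above.)
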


\begin{proof}
    We prove the statement by contradiction. Assume that $p>\left(\frac{\max(a)}{d-\max(a)}\right)\prod_{t=0}^{n+1}\left(\frac{d-a_t}{a_t}\right)$. Up to reordering the weight, we may and will assume that $\max(a)=a_{n+1}$ so that
\begin{align}\label{eq:cota-p}
    p>\left(\frac{\max(a)}{d-\max(a)}\right)\prod_{t=0}^{n+1}\left(\frac{d-a_t}{a_t}\right)=&\prod_{t=0}^{n}\left(\frac{d-a_t}{a_t}\right).
\end{align}
    Since $p>d$ is the order of an automorphism of a quasi-smooth hypersurface,  by \cref{order_automorphism_weight}, we have that there exists an index $\ell\in\{1,2,\dots,n+1\}$ such that the following conditions hold:
  \begin{enumerate}
      \item[$(i)$] $d \equiv a_0 \pmod{a_{\ell}}$, and $d \equiv a_{i+1} \pmod{a_i}$ for $0 \leq i < \ell$; and
      \item[$(ii)$] $\displaystyle{(-1)^{\ell+1}\prod_{t=0}^{\ell} \left(\frac{d-a_t}{a_t}\right)\equiv 1 \pmod{p}}$.
  \end{enumerate}
Now, $(ii)$ yields 
\begin{align} \label{eq:la-milagrosa}
\prod_{t=0}^\ell \left(\frac{d}{a_t}-1\right) +(-1)^\ell=kp,\quad \mbox{for some } k\in \ZZ_{>0}\,.
\end{align}
By \eqref{eq:cota-p}, we have that $\ell=n$ or $\ell={n+1}$. Assume first that $\ell=n$. Then, by \eqref{eq:la-milagrosa}, we have that this is only possible if $k=1$ yielding
$$\prod_{t=0}^n \left(\frac{d}{a_t}-1\right) +(-1)^n=p\,.$$
Since $\gcd(a_i,d)=1$ for every $i$, we can take this last equality modulo $d$ to conclude that $d$ divides $p$ which is a contradiction.

Assume now that $\ell=n+1$. Then by \eqref{eq:la-milagrosa}, we have that this is only possible if
$$\prod_{t=0}^{n+1} \left(\frac{d}{a_t}-1\right) +(-1)^{n+1}=kp,\quad \mbox{for some } k\in \left\{1,2,\dots, \tfrac{d}{a_{n+1}}-1\right\}\,.$$
Again, since $\gcd(a_i,d)=1$ for every $i$, we can take this last equality modulo $d$ to conclude that $d$ divides $kp$. Since $k<d$, we conclude that some divisor $d_0>1$ of $d$ divides $p$ which is again a contradiction.
\end{proof}

We now introduce the natural notion of Klein hypersurface in the context of weighted projective spaces.

\begin{definition}
Let $n, d$ be positive integers, and let $a \in \ZZ_{>0}^{n+2}$. Let $X$ be a quasi-smooth hypersurface of degree $d$ in $\PP^{n+1}_a$. We say that $X$ is a weighted Klein hypersurface in $\PP^{n+1}_a$ if $X$ is isomorphic to  $V(K)$, where $K$ is a homogeneous polynomial of the form
\begin{align*}
K = x_0^{m_0}x_1 + x_1^{m_1}x_2 + \ldots + x_n^{m_n}x_{n+1} + x_{n+1}^{m_{n+1}}x_0\,.
\end{align*}
\end{definition}

\begin{remark}
The weighted Klein hypersurface does not exist for every choice of $d$ and $a$. Indeed, a straightforward verification shows that there is no weighted Klein hypersurface of degree $4$ in $\PP^{3}_a$ when $a = (1,1,1,2)$.

These hypersurfaces were previously studied in the final section of \cite{kollar}. More recently, in \cite{urzua}, Urzúa and Yáñez focused on the surface case in detail, referring to them as Kollár hypersurfaces. We adopt the name weighted Klein hypersurface following the analogy with the classical Klein hypersurface studied in \cite{GL13}.
\end{remark}

In the following proposition we show that a weighted Klein Hypersurface is almost always quasi-smooth. See also \cite[Example~3.5]{GL13}.

\begin{proposition}\label{Klein}
Let $n,d$ be positive integers with $d\geq 2$. Let $a\in\ZZ_{>0}^{n+2}$. Assume that $X=V(K)$ is a weighted Klein Hypersurface in $\PP^{n+1}_a$. Then $X$ is not quasi-smooth if and only if $a=(1,1,\dots,1)$, $d=2$, and $n\equiv 2\pmod{4}$.
\end{proposition}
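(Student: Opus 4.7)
The plan is to apply the Jacobian criterion to the affine cone $C_K \subset \AF^{n+2}$. By weighted-homogeneity of $K$ and Euler's identity, a point $p \in C_K \setminus \{\overline{0}\}$ is singular on the cone if and only if every partial derivative $\partial K/\partial x_i$ vanishes at $p$. With indices read modulo $n+2$, the cyclic shape of $K$ gives immediately
\begin{equation*}
\frac{\partial K}{\partial x_i} = x_{i-1}^{m_{i-1}} + m_i\, x_i^{m_i-1}\, x_{i+1}.
\end{equation*}

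First, I would show that any nontrivial singular point $p = (p_0,\dots,p_{n+1})$ must have every coordinate nonzero: if $p_i = 0$, then $\partial K/\partial x_i(p) = 0$ forces $p_{i-1}^{m_{i-1}} = 0$, hence $p_{i-1} = 0$, and iterating cyclically yields $p = \overline{0}$. Assuming $p_i \neq 0$ for all $i$, rewrite the vanishing conditions as $p_{i-1}^{m_{i-1}} = -m_i\, p_i^{m_i - 1}\, p_{i+1}$ and multiply them over $i \in \ZZ/(n+2)\ZZ$. The product of $p$-factors cancels, leaving
\begin{equation*}
\prod_{i=0}^{n+1} m_i = (-1)^{n+2} = (-1)^n.
\end{equation*}
Since each $m_i$ is a positive integer, this forces $n$ to be even and $m_i = 1$ for every $i$.

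In the surviving sub-case $m_i = 1$, the homogeneity relations become $a_i + a_{i+1} = d$ cyclically, so the weights must alternate as $(a_0, a_1, a_0, a_1, \dots)$ with $a_0 + a_1 = d$. The Jacobian system reduces to the cyclic linear system $p_{i-1} + p_{i+1} = 0$, whose coefficient matrix is (up to sign) the adjacency matrix of the cycle graph $C_{n+2}$. Its eigenvalues $2\cos\bigl(2\pi k/(n+2)\bigr)$ give a nontrivial kernel precisely when $4 \mid n+2$, i.e., when $n \equiv 2 \pmod 4$.

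Finally, I would use the well-formedness condition $\gcd(a_0, a_1) = 1$ together with the requirement that $K$ be irreducible (so that $V(K)$ is genuinely a hypersurface) to collapse the remaining alternating-weight possibilities onto the stated case $a = (1,\dots,1)$, $d=2$. The main obstacle I anticipate is exactly this last reduction: for $n = 2$ one checks directly that $K = (x_0+x_2)(x_1+x_3)$ factors for any choice of alternating weights, so irreducibility eliminates $n=2$ cleanly; for larger $n \equiv 2 \pmod 4$ a more careful argument, presumably via an isomorphism of the ambient weighted projective space or a refined inspection of when the Klein polynomial is permitted under the paper's conventions, is needed to pin the weights down to $a_0 = a_1 = 1$ and $d = 2$.
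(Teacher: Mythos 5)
Your overall strategy coincides with the paper's: apply the Jacobian criterion to the affine cone, argue that a nonzero singular point must have all coordinates nonzero, and then exploit the cyclic chain of relations $p_{i-1}^{m_{i-1}} = -m_i\,p_i^{m_i-1}p_{i+1}$. Your way of closing the chain --- multiplying all $n+2$ relations and cancelling the $p$-factors to obtain $\prod_i m_i=(-1)^n$, hence $m_i=1$ for all $i$ and $n$ even --- is tidier and more conclusive than the paper's, which instead substitutes back into $K(\alpha)=0$ to obtain a single alternating-sum condition $R=0$ and then asserts without justification that $R=0$ forces $a=(1,\dots,1)$ and $d=2$. Your analysis of the surviving case where every $m_i=1$ (weights alternating with $a_0+a_1=d$, Jacobian system linear with coefficient matrix the adjacency matrix of the $(n+2)$-cycle, nontrivial kernel precisely when $4\mid n+2$) is correct and, importantly, does not rely on the nonvanishing claim.

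There are nevertheless two genuine problems. First, the step ``every coordinate of a nontrivial singular point is nonzero'' is false as argued whenever some $m_i=1$: in that case $\partial K/\partial x_i = x_{i-1}^{m_{i-1}} + m_i x_{i+1}$, and $p_i=0$ gives no information about $p_{i-1}$. (The paper's proof makes the identical unjustified claim.) This matters because the case surviving your product argument is exactly the case where all $m_i=1$. Second --- and you correctly flag this yourself --- the final reduction to $a=(1,\dots,1)$, $d=2$ cannot be carried out, because the statement fails for alternating weights: take $a=(1,2,1,2,1,2,1,2)$, $d=3$, $n=6$. Then $K=\sum x_ix_{i+1}$ is an irreducible quadratic polynomial of rank $6$, the hypersurface $V(K)$ is well-formed and not a linear cone, yet $(1,0,-1,0,1,0,-1,0)$ is a nonzero singular point of the affine cone, so $V(K)$ is not quasi-smooth even though $a\neq(1,\dots,1)$ and $d\neq 2$. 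Your cycle-graph computation in fact proves the correct dichotomy in this regime (non-quasi-smoothness occurs if and only if all $m_i=1$ and $n\equiv 2\pmod{4}$, regardless of the alternating weights), so the obstacle you anticipated in the last step is not a missing argument on your side but a defect of the proposition itself: irreducibility only disposes of $n=2$, and well-formedness does not exclude the example above.
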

\begin{proof}
    Assume that $\alpha=[\alpha_0:\alpha_1:\dots:\alpha_{n+1}]\in \PP_a^{n+1}$ is a singular point of $X$. Then 
    \begin{align*}
        K(\alpha)=0\quad \text{and}\quad \frac{\partial K}{\partial x_{i}}(\alpha)=0,\quad \text{for all } i\in\{0,1,\dots,n+1\}.
    \end{align*}
    This yields 
\begin{align*}
\frac{\partial K}{\partial x_{0}}(\alpha)&=\alpha_{n+1}^{m_{n+1}}+m_{0}\alpha_{0}^{m_{0}-1}\alpha_{1}=0, \\
\frac{\partial K}{\partial x_{i}}(\alpha)&=\alpha_{i-1}^{m_{i-1}}+m_{i}\alpha_{i}^{m_{i}-1}\alpha_{i+1}=0, \text{ for all } i\in\{1,2,\dots,n\}, \mbox{ and}\\
\frac{\partial K}{\partial x_{n+1}}(\alpha)&=\alpha_{n}^{m_{n}}+m_{n+1}\alpha_{n+1}^{m_{n+1}-1}\alpha_{0}=0\,.
\end{align*} 
Remark that if any $\alpha_j=0$, the identities above yield that $\alpha_i=0$, for all $i\in\{0,1,\dots,n+1\}$. Hence, we may and will assume that $\alpha_i\neq 0$ , for all $i\in\{0,1,\dots,n+1\}$. Multiplying each of these identities by $\alpha_i$, we obtain
\begin{align*}
\alpha_{n+1}^{m_{n+1}}\alpha_{0}&=-m_{0}\alpha_{0}^{m_{0}}\alpha_{1},\\
\alpha_{i-1}^{m_{i-1}}\alpha_{i}&=-m_{i}\alpha_{i}^{m_{i}}\alpha_{i+1}, \text{ for all } i\in\{1,2,\dots,n\}, \mbox{ and} \\
\alpha_{n}^{m_{n}}\alpha_{n+1}&=-m_{n+1}\alpha_{n+1}^{m_{n+1}}\alpha_{0}\,.
\end{align*}
Then, we obtain that
\begin{align*}
\alpha_{i}^{m_{i}}\alpha_{i+1}=(-1)^{n+1-i}\alpha_{n+1}\alpha_{0}\left(\prod_{j=i+1}^{n+1}m_{j}\right), \quad\text{for all } i\in \{0,1,\dots,n\}.
\end{align*}
Replacing these identities in $K(\alpha)$, we obtain
\begin{align*}
    K(\alpha)=R\cdot\alpha_{n+1}\alpha_{0} \quad \text{where}\quad R=1+\sum_{i=1}^{n+1}\left\{(-1)^{n-i}\left( \prod_{j=i}^{n+1}m_j\right)\right\}\,.
\end{align*}
Now, in this last equality we have $R=0$ if and only if $a=(1,1,\dots,1)$, $d=2$, and $n$ is even. If $a=(1,1,\dots,1)$ and $d=2$ then $K$ is a quadratic polynomial. A routine computation shows that in this case $X=V(K)$ is not smooth if and only if $n\equiv 2\pmod{4}$ proving the proposition.
\end{proof}
\begin{lemma}\label{p_maximum}
    Let $n,d,p$ be positive integers with $p>d$ prime. Let $a\in\ZZ_{>0}^{n+2}$ and assume that $\gcd(a_i,d)=1$ for all $i\in \{0,1,\dots,n+1\}$. Assume further that the conditions in \cref{weighted-matsumura-monsk}~$(i)$ are fulfilled. If $p$ is the order of an automorphism of a quasi-smooth hypersurface of dimension $n$ and degree $d$ in $\PP^{n+1}_a$. Then 
    $$p = \frac{1}{d}\ \left\{\ \prod_{t=0}^{n+1}\left(\frac{d-a_t}{a_t}\right)+(-1)^{n+1}\ \right\}\,$$
    is the largest prime that can be the order of an automorphism of a quasi-smooth hypersurface of $\PP_{a}^{n+1}$.
\end{lemma}
\begin{proof}
Assume that $p>d$ is the order of an automorphism of a quasi-smooth hypersurface of dimension $n$ and degree $d$ in $\PP^{n+1}_a$. Then, by \cref{order_automorphism_weight}~$(ii)$, we have that 
$$A:=(-1)^{\ell+1}\prod_{t=0}^{\ell} \left(\frac{d-a_t}{a_t}\right) - 1 \equiv 0 \pmod{p}\,,$$
and by \cref{order_automorphism_weight}~$(i)$ $A$ is an integer. In particular, this implies $p\leq |A|$. Moreover, the maximum possible value of $|A|$ is attained when $\ell=n+1$. Consequently,
$$p\leq \prod_{t=0}^{n+1} \left(\frac{d-a_t}{a_t}\right) + (-1)^{n+1}$$
but under the hypothesis that $\gcd(a_i,d)=1$ we have that this number is divisible by $d$. Indeed, denoting the inverse of $a_i$ modulo $d$ by $[a_i]^{-1}$, we have that
\begin{align*}
    \prod_{t=0}^{n+1} \left(\frac{d-a_t}{a_t}\right) + (-1)^{n+1}&\equiv\prod_{t=0}^{n+1}  [a_t]^{-1}\left(d-a_t\right)+(-1)^{n+1} \pmod{d}\\
    &\equiv  (-1)^{n+2}+(-1)^{n+1} \pmod{d}\\
    &\equiv 0 \pmod{d}
\end{align*}
Hence, we conclude that the largest prime that can be the order of an automorphism of a quasi-smooth hypersurface of $\PP^{n+1}_a$ is
$$p = \frac{1}{d}\ \left\{\ \prod_{t=0}^{n+1}\left(\frac{d-a_t}{a_t}\right)+(-1)^{n+1}\ \right\}\,.$$
\end{proof}

In this context, we will prove the following theorem that is the main result of this section. 

\begin{theorem}\label{th:Klein}
Let $n,d$ be positive integers with $d\geq 3$. Let $a\in\ZZ_{>0}^{n+2}$. Assume further that the conditions in \cref{weighted-matsumura-monsk}~$(i)$ are fulfilled. Let 
$$p=\frac{1}{d}\ \left\{\ \prod_{t=0}^{n+1}\left(\frac{d-a_t}{a_t}\right)+(-1)^{n+1}\ \right\}\,,$$
and assume that $p>d$ and that $p$ is prime. 
If a quasi-smooth hypersurface $X=V(F)$ of dimension $n$ and degree $d$ admits an automorphism $\varphi$ of order $p$ then $X$ is isomorphic to the weighted Klein hypersurface.
\end{theorem}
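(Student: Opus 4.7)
The plan is to combine \cref{order_automorphism_weight} with \cref{liftable} to pin down the signature of $\varphi$, to extract from quasi-smoothness that the Klein-cycle monomials must appear in the defining polynomial $F$, and finally to normalize $F$ to the Klein polynomial $K$ via a diagonal change of coordinates.

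First I would apply \cref{order_automorphism_weight}: after reordering the weights, there is an integer $\ell\in\{1,\dots,n+1\}$ satisfying the congruence $(-1)^{\ell+1}\prod_{t=0}^{\ell}(d-a_t)/a_t\equiv 1\pmod{p}$, and by \cref{liftable} the signature of $\varphi$ (normalized so that $\sigma_0=1$) is determined in its first $\ell+1$ entries. The first key step is to force $\ell=n+1$. Since $p$ has been chosen essentially equal to $(1/d)\prod_{t=0}^{n+1}(d-a_t)/a_t$, the congruence for any strictly smaller $\ell$ would take the form $\prod_{t=0}^\ell(d-a_t)/a_t=\pm 1+kp$ with $k$ forced to be small by a size comparison; the divisibility trick used in the proof of \cref{cota-klein}, exploiting $\gcd(a_i,d)=1$ together with $p>d$, then rules out $\ell<n+1$.

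With $\ell=n+1$ established, condition $(i)$ of \cref{order_automorphism_weight} shows that each Klein-cycle monomial $x_i^{m_i}x_{i+1}$ (indices modulo $n+2$, with $m_i=(d-a_{i+1})/a_i$) is homogeneous of degree $d$ with respect to $a$. Combining \cref{quasi-smooth_hypersurface} with the fact that every signature entry $\sigma_i$ is nonzero modulo $p$ (forced by $p>d$ and $p\nmid d-a_t$), I would conclude that each Klein-cycle monomial must appear in $F$ with nonzero coefficient, exactly as in the proof of \cref{order_automorphism_weight}.

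The main obstacle is then to rule out any \emph{other} monomial of degree $d$ appearing in $F$. For this I would first show that $\sigma_0,\dots,\sigma_{n+1}$ are pairwise distinct modulo $p$: a coincidence $\sigma_i\equiv\sigma_j\pmod{p}$ leads to a shorter congruence of the form $(-1)^{j-i}\prod_{t=i+1}^{j}(d-a_t)/a_{t-1}\equiv 1\pmod{p}$, excluded by the same kind of size comparison. Distinctness of the signature, combined with the weighted-degree equation $\sum a_t e_t=d$, should then force any $\varphi$-invariant monomial of degree $d$ to be one of the Klein-cycle monomials, so $F$ is a $\CC$-linear combination of those $n+2$ monomials. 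A final diagonal rescaling of the variables, which lies in $\Aut(\PP_a^{n+1})$, normalizes all nonzero coefficients of $F$ to $1$, yielding an isomorphism $X\cong V(K)$. The delicate point throughout is the tightness of the size comparisons with $p=(1/d)[\prod_{t=0}^{n+1}(d-a_t)/a_t+(-1)^{n+1}]$: these estimates need to exploit $\gcd(a_i,d)=1$ in an essential way, much as in the proof of \cref{cota-klein}, and I expect this step to be the real arithmetic core of the argument.
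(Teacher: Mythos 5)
Your overall strategy is the paper's: force $\ell=n+1$ in \cref{order_automorphism_weight}, normalize the signature to $\sigma_i=(-1)^i\prod_{t=1}^{i}\frac{d-a_t}{a_{t-1}}$ via \cref{liftable}, show that the only $\tvarphi^*$-invariant monomials of degree $d$ are the $n+2$ Klein-cycle monomials, and finish with a diagonal rescaling. The gap is in the central step, where you must exclude all other invariant monomials. You propose to prove that $\sigma_0,\dots,\sigma_{n+1}$ are pairwise distinct modulo $p$ and then assert that distinctness, together with the degree equation $\sum_t a_t r_t=d$, forces every invariant monomial to be a Klein monomial. That implication does not hold: a monomial $x^r$ is invariant iff the single linear congruence $\sum_t \sigma_t r_t\equiv 0\pmod{p}$ is satisfied, and pairwise distinctness of the $\sigma_t$ only rules out coincidences between individual eigenvalues; it gives no control over monomials supported on three or more variables, for which this one congruence can a priori have many solutions compatible with the degree constraint. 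Even in the unweighted case of \cite{GL13} this step is the arithmetic heart of the proof (there it amounts to uniqueness of base-$(d-1)$ expansions), and you have not supplied a replacement for it.

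For comparison, the paper's mechanism is the following: with $\ell=n+1$ the quantities $\sigma_i$ are genuine integers satisfying $\sigma_i\mid\sigma_{i+1}$; the defining relation $dp=\prod_{t=0}^{n+1}\frac{d-a_t}{a_t}+(-1)^{n+1}$ is used to rewrite $\sigma_{n+1}$ as $-a_{n+1}(d-a_0)^{-1}$ modulo $p$, turning the invariance congruence into $[(d-a_0)r_0-a_{n+1}r_{n+1}]+(d-a_0)\sum_{i=1}^{n}\sigma_i r_i\equiv 0\pmod{p}$; one then reduces successively modulo $\sigma_1p,\sigma_2p,\dots$, exploiting that all exponents are bounded by $d<p$, to conclude $r_1=\dots=r_n=0$ and $(d-a_0)r_0=a_{n+1}r_{n+1}$, i.e.\ $x^r=x_{n+1}^{m_{n+1}}x_0$; cyclically permuting which $\sigma_j$ is normalized to $1$ yields the remaining Klein monomials. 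Your reduction to $\ell=n+1$ via the size-and-divisibility argument of \cref{cota-klein} is fine (indeed more detailed than what the paper records), and your endgame coincides with the paper's; but without an argument of the above type in the middle, the proof is incomplete.
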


Before proving \cref{th:Klein}, we need some technical lemmas. Our strategy follows that of \cite[Theorem~3.7]{GL13}: we compute a basis for the eigenspace of the eigenvalue $1$ of the action of 
$\widetilde{\varphi}^*$ on $S$, and use it to pin down the defining 
polynomial $F$ up to isomorphism.

\begin{lemma} \label{base-like-decomposition}
    Let $q \in \ZZ$ be an integer and let $m_i\in \ZZ_{\geq 2}$, $i \in \{1,2,\dots n\}$ be a finite collection of integers greater or equal than two. Then, there exist unique non-negative integers $b_i\in \ZZ_{\geq 0}$, $i \in \{0,1,\dots n\}$ with $b_i < m_{i+1}$ for all $i\in\{0,1,\ldots,n-1\}$ such that 
    \begin{align*}  q=b_0+\sum_{i=1}^{n}\left((-1)^{i}b_{i}\prod_{k=1}^{i}m_k\right),
    \end{align*}
\end{lemma}

\begin{proof}
The proof of this lemma follows from the Euclidean division. We proceed by induction on $n$. If $n=1$, then there is only one $m_1$ and by the Euclidean division, there exists unique $c$ and $r$ with $0\leq r<m_1$ such that 
$$q = r+cm_1\,.$$
Hence, $b_0$ is the remainder of the Euclidean division $r$ and $b_1$ is $-c$, the negative of the quotient of the same Euclidean division.

Assume now that the lemma holds for all  $n'<n$ and we will prove it for $n$. Let $m=\prod_{k=1}^{n}m_k$. The Euclidean division assures that there exists unique 
$c$ and $r$ with $0\leq r< m$ such that 
$$q = r+cm= r+c\prod_{k=1}^{n}m_k$$
Now, let $b_n=(-1)^n c$ so that 
$$q = r+(-1)^nb_n\prod_{k=1}^{n}m_k\,.$$
Now the lemma follows applying the induction hypothesis to $r<\prod_{k=1}^{n}m_k$ with the collection of $n'=n-1$ integers $m_i\in \ZZ_{\geq 2}$, $i \in \{1,2,\dots n-1\}$.
\end{proof}

\begin{remark}\label{otra_forma}
    We will apply \cref{base-like-decomposition} by taking
    \begin{align*}
        m_i = \frac{d - a_{i+1}}{a_i}, \quad \text{for } i \in \{0, 1, \dots, n\}\,.
    \end{align*}
    With this notation, the signature of an automorphism $\varphi$ of order $p$ 
    of a quasi-smooth hypersurface $X \subset \PP^{n+1}_{a}$ of degree $d$ and 
    dimension $n$, which is given by
    \begin{align*}
        \sigma = \left(1,\
        -\frac{d - a_1}{a_0},\
        \frac{(d - a_2)(d - a_1)}{a_1 a_0},\
        \ldots,\
        (-1)^{n+1} \prod_{t=1}^{n+1} \frac{d - a_t}{a_{t-1}}
        \right) \in (\ZZ/p\ZZ)^{n+2},
    \end{align*}
    can be expressed compactly as
    \begin{align*}
        \sigma = \left(1,\ -m_0,\ m_0 m_1,\ \ldots,\ 
        (-1)^{n+1} \prod_{t=0}^{n} m_t \right).
    \end{align*}
\end{remark}

The following lemma is the last ingredient in the proof of \cref{th:Klein}.

\begin{lemma}\label{lema_base}
Let $n,d,p$ be positive integers with $p$ prime. Let $a\in\ZZ_{>0}^{n+2}$ and assume $d>2a_i$, for all $i\in\{0,1,\ldots,n+1\}$. Let $b_i\in \ZZ_{\geq 0}$, $i\in\{0,1,\dots,n+1\}$ with $b_i<m_{i}$ for all $i\in\{0,1,\dots,n\}$. Then 
$$b_0+\sum_{k=1}^{n+1}b_k\sigma_k=0\quad \mbox{if and only if}\quad  b_i=0,\ \mbox{for all } i\in\{0,1,\ldots,n+1\}\,.$$
\end{lemma}
\begin{proof}
Since $a_i\geq 1$, the condition $d>2a_i$ implies that $d-a_{i}> \max{a}$, for all $i\in\{0,1,\dots,n+1\}$. Hence, $m_i>1$ for all $i\in\{0,1,\dots,n \}$, and the lemma follows directly from \cref{base-like-decomposition} and \cref{otra_forma}.
\end{proof}

\begin{lemma}\label{lema_unico_saturado}
Let $n,d$ be positive integers with $d>2\max(a)$, and let $a\in\ZZ_{>0}^{n+2}$.
Let $\mathbf{x}^r = x_0^{r_0}\cdots x_{n+1}^{r_{n+1}}$ be a monomial of degree
$d$ with respect to the grading given by $a$, so that
$\sum_{i=0}^{n+1}a_ir_i=d$ and $r_i\in\ZZ_{\geq 0}$. Then:
\begin{itemize}
\item[$(i)$] $r_i \leq m_i$ for all $i\in\{0,1,\dots,n\}$.
\item[$(ii)$] There is \emph{at most one} index $j\in\{0,\dots,n\}$ with $r_j = m_j$.
\item [$(iii)$] If $r_j=m_j$ for some $j\in\{0,\dots,n\}$, then $r_i<m_i$ for all
      $i\in\{0,\dots,n\}$ with $i\neq j$.
\end{itemize}
\end{lemma}

\begin{proof}
To prove $(i)$, we will prove by contradiction. Assume that $r_i>m_i$ for some $i\in\{0,1,\dots,n\}$. Since $m_i> 1$, then
\begin{align*}
    a_ir_i+a_{i+1}r_{i+1}&> a_im_i+a_{i+1}
\end{align*}
which give us a contradiction since $d\geq a_ir_i+a_{i+1}r_{j+1}$ and $a_im_i+a_{i+1}=d$. Hence $r_i\leq m_i$ for all $i\in\{0,1,\dots,n\}$.

To prove the last two statements, suppose that $r_j = m_j$ for some $j \in \{0,1,\dots,n\}$. Then
\begin{align*}
    a_j m_j = d - a_{j+1},
\end{align*}
and therefore
\begin{align*}
 \sum_{i \neq j} a_i r_i = d - a_j m_j = a_{j+1}.   
\end{align*}
Now, let $i \in \{0,1,\dots,n\}$ with $i \neq j$. Then
\begin{align*}
    a_i r_i \leq a_{j+1} < d - a_{i+1} = a_i m_i,
\end{align*}
where the strict inequality follows from the assumption $d > 2\max(a)$. Consequently, $r_i < m_i$ for all $i \neq j$, which proves both $(iii)$ and $(ii)$.
\end{proof}

Having established these results, we now proceed to the proof of \cref{th:Klein}.
   
\begin{proof}[Proof of \cref{th:Klein}]
Assume that $X=V(F)$ is a quasi-smooth hypersurface of dimension $n$ and degree $d$, where $F$ is a homogeneous polynomial of degree $d$ with respect to the grading given by $a\in\ZZ_{>0}^{n+2}$. Assume further that $X$ admits an automorphism $\varphi$ of order $p>d$ prime, where  
\begin{align} \label{eq:dp}
p=\frac{1}{d}\ \left\{\ \prod_{t=0}^{n+1}\left(\frac{d-a_t}{a_t}\right)+(-1)^{n+1}\ \right\}\,.
\end{align}
Then, by \cref{order_automorphism_weight} and \cref{p_maximum}, we have $\ell=n+1$. We now fix an order of the weights $a$ so that we have $d=a_{n+1}m_{n+1}+a_0$ and $d=a_im_i+a_{i+1}$, for all $i\in\{0,1,\ldots,n\}$. And by \cref{liftable}, with the chosen order of the weight, the automorphism $\varphi$  of order prime $p$ is given by the following signature
\begin{align}\label{eq:signatura-automorphism}
\sigma=\left(\sigma_0,\sigma_1,\dots,\sigma_{n+1} \right)\in (\ZZ/p\ZZ)^{n+2}\, ,
\end{align}
where
$$\sigma_0=1, \quad\mbox{and}\quad \sigma_i=(-1)^i\prod_{t=1}^i\frac{d-a_t}{a_{t-1}},\quad\mbox{for all } i\in\{1,2,\dots,n+1\}\, ,$$
Remark that
\begin{align}\label{eq:sigma_i}
    \sigma_{i+1}=-\left(\frac{d-a_{i+1}}{a_i}\right)\sigma_i=-m_{i}\sigma_i,\quad\mbox{for all } i\in\{0,1,\dots,n\}.
\end{align}

Let $S$ be the subspace of $\CC[x_0,\dots,x_{n+1}]$ composed of homogeneous polynomials of degree $d$ with respect to the grading given by $a\in\ZZ_{>0}^{n+2}$. We also let $\tvarphi^*\colon S\to S$ be action of $\tvarphi^*$ on $S$. Let $\mathcal{E}\subset S$ be the eigenspace associated to the eigenvalue $1$ of the linear automorphism $\tvarphi^*$. Since $\varphi$ is an automorphism of $X$, we have that $F\in \mathcal{E}$. We will now compute a basis for $\mathcal{E}$.

Let now $\mathbf{x}^r$ be a monomial in $S$, then
\begin{align}\label{eq:in-S}
\mathbf{x}^r=x_{0}^{r_0}\cdots x_{n+1}^{r_{n+1}},\quad \mbox{with}\quad \sum_{i=0}^{n+1}a_i r_i=d,\text{ and } r_i\in \ZZ_{\geq 0}.
\end{align}
Assume further that $\mathbf{x}^r\in\mathcal{E}$. Hence, by \eqref{eq:signatura-automorphism}, we have 
\begin{align}\label{eq:in-E}
\sum_{i=0}^{n+1}\sigma_i r_i \equiv 0 \pmod{p}\quad \text{i.e.}\quad \sum_{i=0}^{n+1}\sigma_i r_i =kp, \text{ for some }k\in \ZZ.
\end{align}
We first consider the case $k \neq 0$. Using \eqref{eq:dp}, we obtain
\begin{align*}
    \sum_{i=0}^{n+1}\sigma_i r_i
    = \frac{k}{d}\left( \prod_{t=0}^{n+1}\frac{d-a_t}{a_t} + (-1)^{n+1} \right).
\end{align*}
By the definition of $\sigma_{n+1}$, it follows that
\begin{align}\label{eq:notrearranged}
    \sum_{i=0}^{n+1}\sigma_i r_i&= \frac{k}{d}\left( (-1)^{n+1} m_{n+1}\sigma_{n+1} + (-1)^{n+1} \right) 
\end{align}
Observe that the expression 
\begin{align*}
    \frac{k}{d}\left( (-1)^{n+1} m_{n+1}\sigma_{n+1} + (-1)^{n+1} \right) \, ,
\end{align*}
is an integer if and only if $k=td$, for some $t\in \ZZ\setminus \{0\}$. Moreover, by the definition of $\sigma_{n+1}$, this integer is positive when $n$ is odd and $t\in\ZZ_{>0}$, and negative when $n$ is even and $t\in\ZZ_{<0}$. Observe that in both cases, we have $r_0\geq (-1)^{n+1}t>0$.

Rewriting \eqref{eq:notrearranged}, we get
\begin{align*}
    r_0 + \sum_{i=1}^{n}\sigma_i r_i + \sigma_{n+1} r_{n+1}
    = (-1)^{n+1}\,t m_{n+1}\,\sigma_{n+1} + (-1)^{n+1}t \, ,
\end{align*}
which can be rearranged as
\begin{align}\label{eq:rearranged}
    \left(r_0 + (-1)^n t\right)
    + \sum_{i=1}^{n} r_i\sigma_i
    + \sigma_{n+1}\left(r_{n+1} + (-1)^n t\right)
    = 0.
\end{align}

We claim that $r_i<m_i$ for all $i\in\{1,2,\dots,n\}$.
Indeed, suppose $r_j=m_j$ for some $j\in\{1,2,\dots,n\}$.
By \cref{lema_unico_saturado}~$(iii)$, $r_i<m_i$ for all $i\in\{0,1,\dots,n\}$ and $i\neq j$. By \eqref{eq:sigma_i}, we have that  $\sigma_j m_j = -\sigma_{j+1}$ and replacing in \eqref{eq:rearranged}, we obtain
\begin{align*}
    \left(r_0 + (-1)^n t\right)
    + \sum_{\substack{i=1\\
    i\not=j,j+1}}^{n}\sigma_i  r_i-\sigma_{j+1}+\sigma_{j+1}r_{j+1}
    + \left(r_{n+1} + (-1)^n t\right)\sigma_{n+1}=0\\
    \left(r_0 + (-1)^n t\right)
    + \sum_{\substack{i=1\\
    i\not=j,j+1}}^{n} \sigma_i r_i +\sigma_{j+1}(r_{j+1}-1)
    + \sigma_{n+1}\left(r_{n+1} + (-1)^n tm_{n+1}\right)=0\, .
\end{align*}
Setting $b_0 = r_0+(-1)^n t$, $b_{j+1} = r_{j+1}-1$, $b_i = r_i$ for 
$i \in \{1,2,\dots,n\}\setminus\{j,j+1\}$, and $b_{n+1} = r_{n+1}+(-1)^n t\,m_{n+1}$.
All these satisfy the strict bounds of \cref{lema_base}.
Applying \cref{lema_base}, we obtain $b_i=0$ for all $i$.
From $b_0=0$ and $b_{n+1}=0$ we get $r_0 = (-1)^{n+1}t$ and 
$r_{n+1} = (-1)^{n+1}t\,m_{n+1}$. Substituting all nonzero terms into the 
degree condition~\eqref{eq:in-S} yields
$$a_0\,r_0 + a_j m_j + a_{j+1}\cdot 1 + a_{n+1}\,r_{n+1} 
= (-1)^{n+1}t\,(a_0 + a_{n+1}m_{n+1}) + (d - a_{j+1}) + a_{j+1} = d\,,$$
where we used $a_jm_j = d - a_{j+1}$.
Since $a_0 + a_{n+1}m_{n+1} = d$, this simplifies to $(-1)^{n+1}t\cdot d + d = d$,
forcing $t=0$ and contradicting $k = td \neq 0$.

Now all coefficients in \eqref{eq:rearranged} satisfy the strict bounds of
\cref{lema_base}. Applying \cref{lema_base} gives
$$r_0 + (-1)^n t = 0,\quad
r_{n+1}+(-1)^n t\,m_{n+1}=0, \quad \text{and}\quad r_i=0\ \text{ for all }i\in\{1,\dots,n\}\,.$$
From the first equation, we obtain that $r_0 = (-1)^{n+1}t$. And substituting into the second equation yields that $r_{n+1} = r_0 m_{n+1}$. Finally, using \eqref{eq:in-S}, we obtain $r_0 = 1$, and therefore $r_{n+1} = m_{n+1}$. This corresponds to the monomial $\mathbf{x} = x_{n+1}^{m_{n+1}}x_0$.

We now consider the case $k = 0$. Then
\begin{align*}
    \sum_{i=0}^{n+1}\sigma_i r_i = 0.
\end{align*}
By \cref{lema_unico_saturado}~$(ii)$, there is at most one $j\in\{0,1,\dots,n\}$ with
$r_j=m_j$. If no such $j$ exists, all $r_i<m_i$ for $i\in\{0,1, \dots,n\}$, and
\cref{lema_base} gives that every $r_i=0$. Then $\sum a_ir_i=0$, so this subcase yields no monomials in $S$.

Suppose $r_j=m_j$ for a unique $j\in\{0,1,\dots,n\}$.
By \cref{lema_unico_saturado}~$(iii)$, $r_i<m_i$ for all $i\in \{0,1,\dots,n\}$ and $i\neq j$.
Then
\begin{align*}
    \sum_{\substack{i=0\\i\neq j,j+1}}^{n+1}\sigma_ir_i
    +\sigma_jm_j+\sigma_{j+1}r_{j+1}=0.
\end{align*}
Using the definition of $\sigma_{j+1}$ from \eqref{eq:sigma_i}, this becomes
\begin{align*}
    \sum_{\substack{i=0\\i\neq j,j+1}}^{n+1}\sigma_ir_i
    +\sigma_{j+1}(r_{j+1}-1)=0.
\end{align*}
Since $r_{j+1}<m_{j+1}$, we have 
$r_{j+1}-1 < m_{j+1}$, and $r_{j+1}-1\geq 0$.
All coefficients now satisfy the strict bounds of \cref{lema_base}.
Applying \cref{lema_base}, we obtain that $r_{j+1}=1$ and $r_i=0$, for every $i\in\{0,1,\dots,n+1\}$ and $i\not= j$. Using~\eqref{eq:in-S}, we obtain $a_jm_j+a_{j+1}=d$, and this corresponds to the monomial
$$\mathbf{x} = x_j^{m_j}x_{j+1},\quad j\in\{0,1,\dots,n\}.$$

Combining both cases, a basis for the eigenspace $\mathcal{E}$ is
$$\left\{x_{n+1}^{m_{n+1}}x_0\right\}\cup\left\{x_j^{m_j}x_{j+1}\mid j=0,1,\dots,n\right\}\,.$$

Hence,
$$
F = \lambda_0 \cdot x_0^{m_0} x_1 + \lambda_1 \cdot x_1^{m_1} x_2 + \cdots + \lambda_n \cdot x_n^{m_n} x_{n+1} + \lambda_{n+1} \cdot x_{n+1}^{m_{n+1}} x_0.
$$
Since \( X = V(F) \) is quasi-smooth, by \cref{quasi-smooth_hypersurface} , $\lambda_i \neq 0$, for all $i\in \{0,1,\dots,n+1\}$ and applying a linear change of coordinates we can put
$$
F = x_0^{m_0} x_1 + x_1^{m_1} x_2 + \cdots + x_n^{m_n} x_{n+1} + x_{n+1}^{m_{n+1}} x_0\,,
$$
which proves that $X=V(F)$ is isomorphic to the weighted Klein hypersurface.

\end{proof}

\section*{Data availability statement}

Not applicable.

\section*{Conflict of interest}

The authors declare that they have no conflict of interest.

\bibliographystyle{alpha}
\bibliography{nfolds5}
\end{document}